\newcommand{\dx}{\,\mathrm{d}}
\newcommand{\R}{\mathbb{R}}
\newcommand{\N}{\mathbb{N}}
\newcommand{\functionSpace}{\mathcal{V}}
\newcommand{\Tmax}{T_\mathrm{max}}
\newcommand{\groundTruth}{x_g}
\newcommand{\Id}{\mathrm{Id}}
\newtheorem{theorem}{Theorem}[section]
\newtheorem{corollary}{Corollary}[theorem]
\newtheorem{remark}{Remark}
\begin{document}

\title{An Optimal Control Approach to Early Stopping Variational Methods for Image Restoration}
\author{Alexander Effland \and Erich Kobler \and Karl Kunisch \and Thomas Pock}
% \institute{Alexander Effland \and Erich Kobler \and Thomas Pock
% \at  Institute of Computer Graphics and Vision, Graz University of Technology, Graz, Austria.
% \email{\{alexander.effland,erich.kobler,pock\}@icg.tugraz.at}
% \and
% Karl Kunisch
% \at Institute of Mathematics and Scientific Computing, University of Graz, Graz, Austria.
% \email{karl.kunisch@uni-graz.at}
% }

\maketitle

\begin{abstract}
We investigate a well-known phenomenon of variational approaches in image processing, where typically the best image
quality is achieved when the gradient flow process is stopped before converging to a stationary point.
This paradox originates from a tradeoff between optimization and modelling errors of the underlying variational model
and holds true even if deep learning methods are used to learn highly expressive regularizers from data.
In this paper, we take advantage of this paradox and introduce an optimal stopping time into the gradient flow process, which in turn is learned from data by means of an optimal control approach.
As a result, we obtain highly efficient numerical schemes that achieve competitive results for image denoising and image deblurring.
A nonlinear spectral analysis of the gradient of the learned regularizer gives enlightening insights about the different regularization properties.
\end{abstract}

\section{Introduction}
Throughout the past years, numerous image restoration tasks in computer vision
such as denoising~\cite{RuOsFa92}, segmentation~\cite{MuSh89} or super-resolution~\cite{ScLe15} have benefited from a variety of pioneering and novel variational methods.
In general, variational methods~\cite{ChPo16} are aiming at the minimization of an energy functional designed for a specific image reconstruction problem, where
the energy minimizer defines the restored output image.
In this paper, the energy functional is composed of an a priori known, task-dependent and quadratic data fidelity term and a Field of Experts type regularizer~\cite{RoBl09}, whose
building blocks are learned kernels and learned activation functions.
This regularizer generalizes the prominent total variation regularization functional and is capable of accounting for higher-order image statistics.
A classical approach to minimize the energy functional is a \emph{continuous-time gradient flow}, which defines a trajectory emanating from a fixed initial image.
Typically, the regularizer is adapted such that the end point image of the trajectory lies in a proximity of the ground truth image.
However, even the general class of Field of Experts type regularizers is not able to capture the entity of the complex structure of natural images,
that is why the end point image substantially differs from the ground truth image.
To address this insufficient modeling, we advocate an \emph{optimal control problem} using the gradient flow differential equation as the state equation and a cost functional that quantifies the distance of the
ground truth image and the gradient flow trajectory evaluated at the stopping time~$T$.
Besides the parameters of the regularizer, the stopping time is an additional control parameter learned from data.

The main contribution of this paper is the derivation of criteria to automatize the calculation of the \emph{optimal stopping time~$T$} for the aforementioned optimal control problem.
In particular, we observe that the learned stopping time is always finite even if the learning algorithm has the freedom to choose a larger stopping time.
This contradicts the variational paradigm that the quality of the outcome of iterative algorithms improves with larger values of~$T$.

For the numerical optimization, we discretize the state equation by means of the explicit Euler and Heun schemes.
This results in an iterative scheme which can be interpreted as static \emph{variational networks}~\cite{ChPo17,HaKl18,KoKl17} as a subclass of deep learning models~\cite{LeBe15}.
Here, the prefix "static" refers to constant regularizers with respect to time.
In several experiments we demonstrate the superiority of the learned static variational networks for image restoration tasks terminated at the optimal stopping time over classical variational methods.
Consequently, the early stopped gradient flow approach is better suited for image restoration problems and computationally more efficient than the classical variational approach.

A well-known major drawback of mainstream deep learning approaches is the lack of interpretability of the learned networks.
In contrast, following~\cite{Gi18}, the variational structure of the proposed model allows us to analyze the learned regularizers by means of a nonlinear spectral analysis.
The computed eigenpairs reveal insightful properties of the learned regularizers.

\medskip

There have been several approaches to cast deep learning models as dynamical systems in the literature, in which the model parameters can be seen as control parameters of an optimal control problem.
E~\cite{E17} clarified that deep neural networks such as residual networks~\cite{HeZh16} arise from a discretization of a suitable dynamical system.
In this context, the training process can be interpreted as the computation of the controls in the corresponding optimal control problem.
In \cite{LiHa18,LiCh18}, Pontryagin’s maximum principle is exploited to derive necessary optimality conditions for the optimal control problem in continuous time, which results in a rigorous discrete-time optimization.
Certain classes of deep learning networks are examined as mean-field optimal control problems in~\cite{EHa19}, where optimality conditions of the Hamilton--Jacobi--Bellman type and the Pontryagin type are derived.
The effect of several discretization schemes for classification tasks has been studied under the viewpoint of stability in \cite{HaRu17,ChMe18,BeCe19}, which leads to a variety of different network architectures that are empirically proven to be more stable.

\medskip

The benefit of early stopping for iterative algorithms is examined in the literature from several perspectives.
Raskutti et al.~\cite{RaWa11} exploit early stopping for non-para\-metric regression problems in reproducing kernel Hil\-bert spaces (RKHS) to prevent overfitting and derive a data-dependent stopping rule.
Yao et al.~\cite{YaRo07} discuss early stopping criteria for gradient descent algorithms for RKHS and relate these results to the Landweber iteration.
Quantitative properties of the early stopping condition for the Landweber iteration are presented in Binder et al.~\cite{BiHa96}.
Zhang and Yu \cite{ZhYu05} prove convergence and consistency results for early stopping in the context of boosting.
Prechelt~\cite{Pr12} introduces three heuristic criteria for optimal early stopping based on the performance of the training and validation error.
Rosasco and Villa~\cite{RoVi15} investigate early stopping in the context of incremental iterative regularization and prove sample bounds in a stochastic environment.
Matet et al.~\cite{MaRo17} exploit an early stopping method to regularize (strongly) convex functionals.
In contrast to these approaches, we propose early stopping on the basis of finding a local minimum with respect to the time horizon of a properly defined energy.

\medskip

To illustrate the necessity of early stopping for iterative algorithms, we revisit the established TV-$L^2$ denoising functional~\cite{RuOsFa92},
which amounts to minimizing the variational problem~$E[u]=\|u-g\|_{L^2(\Omega)}^2+\nu|Du|(\Omega)$
among all functions~$u\in BV(\Omega)$,
where $\Omega\subset\R^n$ denotes a bounded domain, $\nu>0$ is the regularization parameter and $g\in L^\infty(\Omega)$ refers to a corrupted input image.
An elementary, yet very inefficient optimization algorithm relies on a gradient descent using a finite difference discretization
for the regularized functional ($\epsilon>0$)
\begin{equation}
E_\epsilon[u_h]=\|u_h-g_h\|_{L^2(\Omega_h)}^2+\nu\sum_{(i,j)\in\Omega_h}\sqrt{|( D u_h)_{i,j}|^2+\epsilon^2},
\label{eq:TVL2}
\end{equation}
where $\Omega_h$ denotes a lattice, $u_h,g_h:\Omega_h\to\R$ are discrete functions and $( D u_h)_{i,j}$
is the finite difference gradient operator with Neumann boundary constraint (for details see \cite[Section~3]{ChCaNo09}).
For a comprehensive list of state-of-the-art methods to efficiently solve TV-based variational problems we refer the reader to \cite{ChPo16}.
\begin{figure}[htb]
\begin{center}
\includegraphics[width=.8\linewidth]{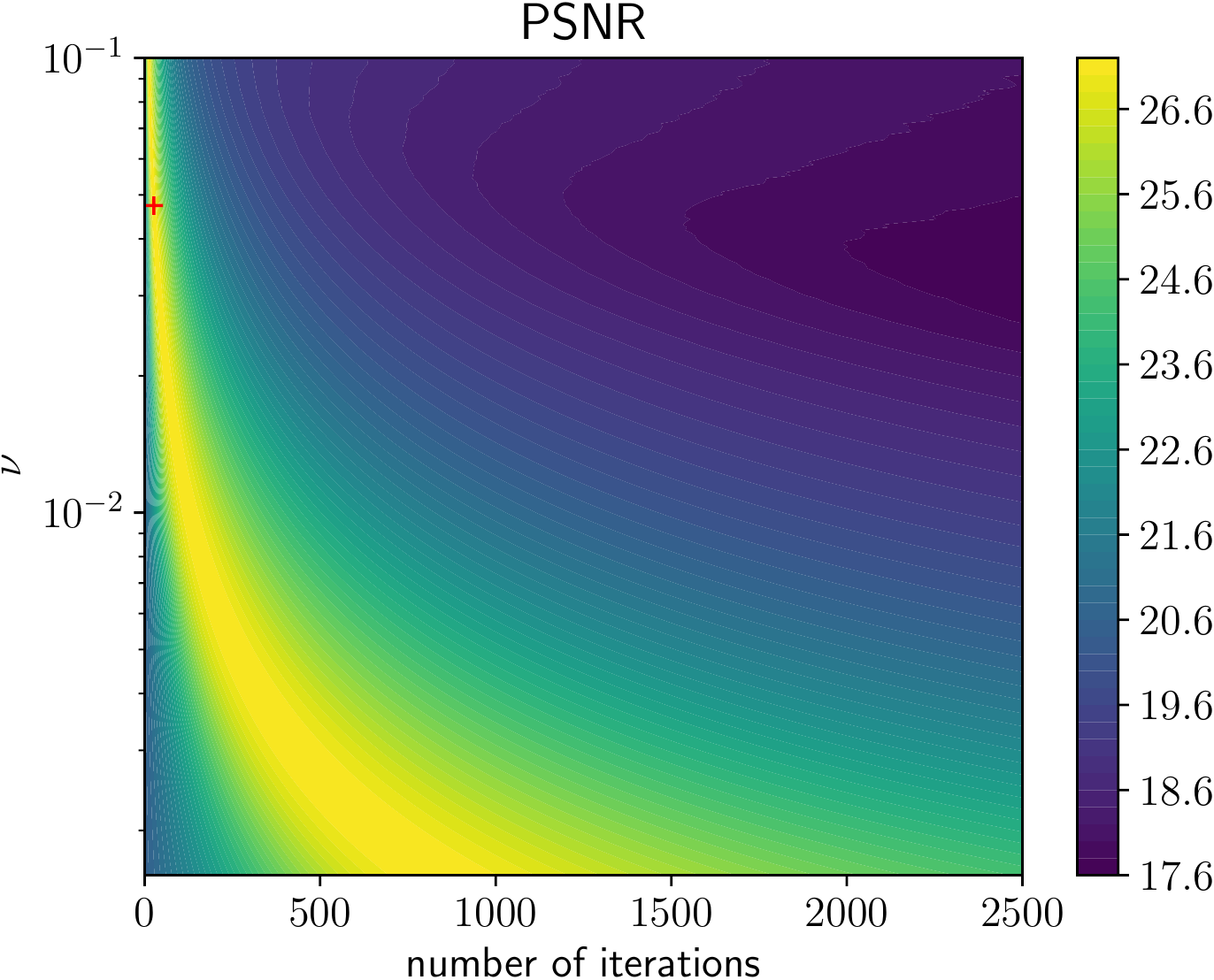}
\end{center}
\caption{Contour plot of the peak signal-to-noise ratio depending on the number of iterations and the regularization parameter~$\nu$ for TV-$L^2$ denoising.
The global maximum is marked with a red cross.}
\label{fig:TVL2}
\end{figure}
Figure~\ref{fig:TVL2} depicts the dependency of the peak signal-to-noise ratio on the number of iterations and the regularization parameter~$\nu$
for the TV-$L^2$ problem~\eqref{eq:TVL2} using a step size~$10^{-4}$ and $\epsilon=10^{-6}$,
where the input image~$g\in L^\infty(\Omega_h,[0,1])$ with a resolution of~$512\times 512$ is corrupted by additive Gaussian noise with standard deviation~$0.1$.
As a result, for each regularization parameter~$\nu$ there exists a unique optimal number of iterations, where the signal-to-noise ratio peaks.
Beyond this point, the quality of the resulting image is deteriorated by staircasing artifacts and fine texture patterns are smoothed out.
The global maximum~$(26,0.0474)$ is marked with a red cross, the associated image sequence is shown in Figure~\ref{fig:ROFsequence} (left to right: input image, noisy image, restored images after $13,26,39,52$~iterations).\footnote{
image by Nichollas Harrison (CC BY-SA 3.0)}
\begin{figure*}[htb]
\includegraphics[width=\linewidth]{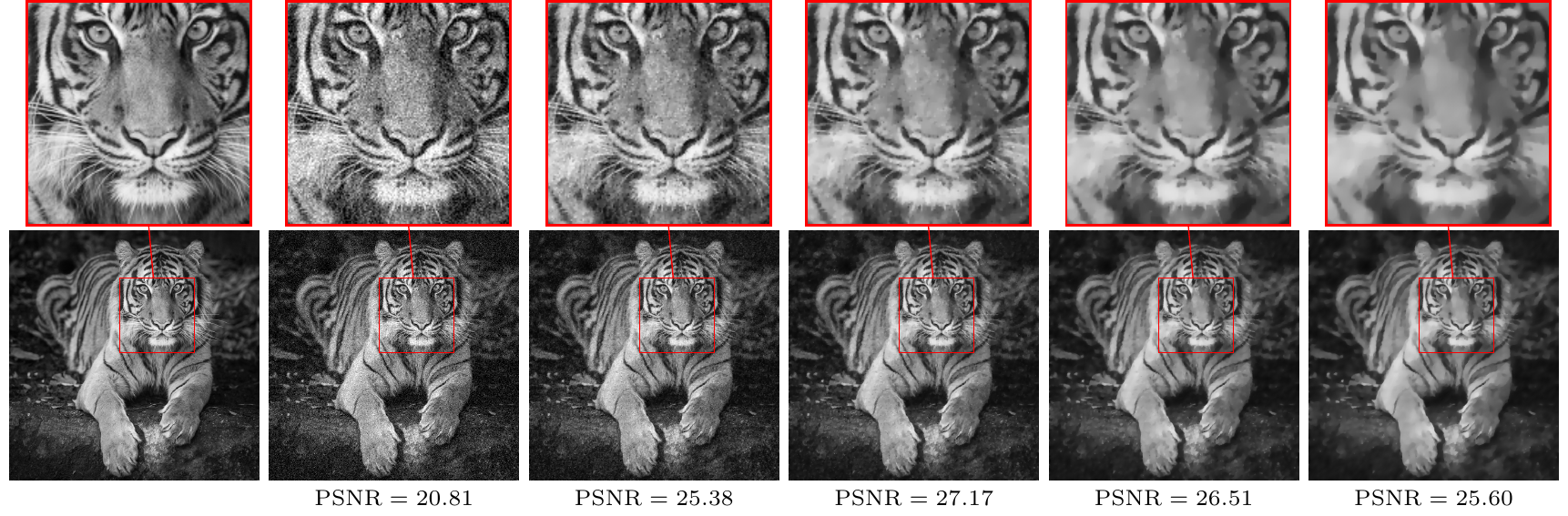}
\caption{Image sequence with globally best PSNR value. Left to right: input image, noisy image, restored images after $13,26,39,52$~iterations.}
\label{fig:ROFsequence}
\end{figure*}
If the gradient descent is considered as a discretization of a time continuous evolution process governed by a differential equation,
then the optimal number of iterations translates to an optimal stopping time.

\medskip

In this paper, we refer to the standard inner product in the Euclidean space~$\R^n$ by $\langle\cdot,\cdot\rangle$.
Let $\Omega\subset\R^n$ be a domain.
We denote the space of continuous functions by $C^0(\Omega)$, the space of $k$-times continuously differentiable functions by $C^k(\Omega)$ for $k\geq 1$, the Lebesgue space by $L^p(\Omega)$, $p\in[1,\infty)$, and the Sobolev space by $H^m(\Omega)=W^{m,2}(\Omega)$, $m\in\N$,
where the latter space is endowed with the Sobolev (semi-)norm for $f\in H^m(\Omega)$ defined as
$|f|_{H^m(\Omega)}=\|D^m f\|_{L^2(\Omega)}$ and $\|f\|_{H^m(\Omega)}=(\sum_{j=0}^m|f|_{H^j(\Omega)}^2)^\frac{1}{2}$.
With a slight abuse of notation we frequently set $C^k(\overline{\Omega})=C^0(\overline{\Omega})\cap C^k(\Omega)$.
The identity matrix in $\R^n$ is denoted by~$\Id$.
Finally, $\mathbf{1}=(1,\ldots,1)^\top\in\R^n$ is the one vector.

\medskip

This paper is organized as follows: 
In section~\ref{sec:timeCont}, we argue that certain classes of image restoration problems can be perceived as optimal control problems, in which
the state equation coincides with the evolution equation of static variational networks, and we prove the existence of solutions under quite general assumptions.
Moreover, we derive a first order necessary as well as a second order sufficient condition for the optimal stopping time in this optimal control problem.
A Runge--Kutta time discretization of the state equation results in the update scheme for static variational network, which is discussed in detail in section~\ref{sec:timeDisc}.
In addition, we visualize the effect of the optimality conditions in a simple numerical example in~$\R^2$ and discuss alternative approaches for the derivation of static variational networks.
Finally, we demonstrate the applicability of the optimality conditions to two prototype image restoration problems in section~\ref{sec:results}: denoising and deblurring.

\section{Optimal Control Approach to Early Stopping}\label{sec:timeCont}
In this section, we derive a time continuous analog of static variational networks as gradient flows of an energy functional~$\mathcal{E}$ composed of a data fidelity term~$\mathcal{D}$ and a Field of Experts type regularizer~$\mathcal{R}$.
The resulting ordinary differential equation is used as the state equation of an optimal control problem, in which the cost functional
incorporates the squared $L^2$-distance of the state evaluated at the optimal stopping time to the ground truth as well as (box) constraints of the norms of the stopping time, the kernels and the activation functions.
We prove the existence of minimizers of this optimal control problem under quite general assumptions.
Finally, we derive first and second order optimality conditions for the optimal stopping time using a Lagrangian approach.

Let $u\in\R^n$ be a data vector, which is either a signal of length~$n$ in 1D, an image of size $n=n_1\times n_2$ in 2D or spatial data of size $n=n_1\times n_2\times n_3$ in 3D.
Since we are primarily interested in two-dimensional image restoration, we focus on this task in the rest of this paper and merely remark that all results can be generalized to the remaining cases.
For convenience, we restrict to gray-scale images, the generalization to color or multi-channel images is straightforward.
In what follows, we analyze an \emph{energy functional} of the form
\begin{equation}
\mathcal{E}[u]=\mathcal{D}[u]+\mathcal{R}[u]
\label{eq:energyFunc}
\end{equation}
that is composed of a \emph{data fidelity term~$\mathcal{D}$} and an \emph{regularizer~$\mathcal{R}$} specified below.
We incorporate the \emph{Field of Experts regularizer}~\cite{RoBl09}, which is a common generalization of the discrete total variation regularizer and is given by
\[
\mathcal{R}[u]=\sum_{k=1}^{N_K}\sum_{i=1}^m\rho_k((K_ku)_i)
\]
with \emph{kernels~$K_k\in\R^{m\times n}$} and associated nonlinear functions~$\rho_k:\R\to\R$ for $k=1,\ldots,N_K$.

Throughout this paper, we consider the specific data fidelity term
\[
\mathcal{D}[u]=\frac{1}{2}\|Au-b\|_2^2
\]
for fixed $A\in\R^{l\times n}$ and fixed $b\in\R^l$.
We remark that various image restoration tasks can be cast in exactly this form for suitable choices of $A$ and $b$ \cite{ChPo16}.

The \emph{gradient flow}~\cite{AmGi08} associated with the energy~$\mathcal{E}$ for a time~$t\in(0,T)$ reads as
\begin{align}
\dot{\tilde{x}}(t)&=-D\mathcal{E}[\tilde{x}(t)]\notag\\
&=-A^\top(A\tilde{x}(t)-b)-\sum_{k=1}^{N_K}K_k^\top\Phi_k(K_k\tilde{x}(t)),\label{eq:gradientFlow}\\
\tilde{x}(0)&=x_0,\label{eq:gradientFlowInitial}
\end{align}
where $\tilde{x}\in C^1([0,T],\R^n)$ denotes the \emph{flow of $\mathcal{E}$} with $T\in\R$,
and the function $\Phi_k\in\functionSpace^s$ is given by
\[
(y_1,\ldots,y_m)^\top\mapsto(\rho_k'(y_1),\ldots,\rho_k'(y_m))^\top.
\]
For a fixed $s\geq 0$ and an a priori constant bounded open interval $I\subset\R$, we consider $C^s(\R,\R)$-conforming basis functions $\psi_1,\ldots,\psi_{N_w}$ with compact support in~$\overline{I}$ for $N_w\geq 1$.
The vectorial function space~$\functionSpace^s$ for the activation functions is composed of $m$ identical component functions~$\phi\in C^s(\R,\R)$, which
are given as the linear combination of $(\psi_j)_{j=1}^{N_w}$ with weight vector~$w\in\R^{N_w}$, i.e.
\begin{equation}    
\functionSpace^s\coloneqq\left\{\Phi=(\phi,\ldots,\phi):\R^m\to\R^m\Bigg|\phi=\sum_{j=1}^{N_w}w_j\psi_j\right\}.
\label{eq:functionSpace}
\end{equation}
We remark that in contrast to \cite{HaRu17,BeCe19}, inverse problems for image restoration rather than image classification are examined.
Thus, we incorporate in \eqref{eq:gradientFlow} the classical gradient flow with respect to the full energy functional in order to promote data consistency, whereas
in the classification tasks only the gradient flow with respect to the regularizer is considered.

In what follows, we analyze an optimal control problem, for which the state equation~\eqref{eq:gradientFlow} and initial condition~\eqref{eq:gradientFlowInitial} will arise as equality constraints.
The cost functional~$J$ incorporates the $L^2$-distance of the flow~$\tilde{x}$ evaluated at time~$T$ and the ground truth state~$\groundTruth\in\R^n$ and is given by
\[
\widetilde{J}(T,(K_k,\Phi_k)_{k=1}^{N_K})\coloneqq\frac{1}{2}\|\tilde{x}(T)-\groundTruth\|_2^2.
\]
We assume that the controls~$T$, $K_k$ and $\Phi_k$ satisfy the box constraints
\begin{equation}
0\leq T\leq\Tmax,\quad
\alpha(K_k)\leq 1,\quad
\beta(\Phi_k)\leq 1,
\label{eq:boxConstraints}        
\end{equation}
as well as the zero mean condition
\begin{equation}
K_k \mathbf{1}=0\in\R^m.
\label{eq:kernelConstraints}
\end{equation}
Here, we have $k=1,\ldots,N_K$ and we choose a fixed parameter $\Tmax>0$.
Further, $\alpha:\R^{m\times n}\to\R_0^+$ and $\beta:\functionSpace^s\to\R_0^+$ are continuously differentiable functions with non-vanishing gradient such that
$\alpha(K)\to\infty$ and $\beta(\Phi)\to\infty$ as $\|K\|\to\infty$ and $\|\Phi\|\to\infty$.
We include the condition~\eqref{eq:kernelConstraints} to reduce the dimensionality of the kernel space.
Moreover, this condition ensures an invariance with respect to gray-value shifts of image intensities.

The particular choice of the cost functional originates from the observation that a visually appealing image restoration is obtained as the closest point on the trajectory
of the flow~$\tilde{x}$ (reflected by the $L^2$-distance) to $\groundTruth$ subjected to a moderate flow regularization as quantified by the box constraints.
Figure~\ref{eq:trajectory} illustrates this optimization task for the optimal control problem.
Among all trajectories of the ordinary differential equation~\eqref{eq:stateEq} emanating from a constant initial value~$x_0$, one seeks the trajectory that is closest 
to the ground truth~$\groundTruth$ in terms of the squared Euclidean distance as visualized by the energy isolines.
Note that each trajectory is uniquely determined by $(K_k,\Phi_k)_{k=1}^{N_K}$.

The constraint of the stopping time is solely required for the existence theory.
For the image restoration problems, a finite stopping time can always be observed without constraints.
Hence, the optimal control problem reads as
\begin{equation}
\min_{T\in\R,K_k\in\R^{m\times n},\Phi_k\in\functionSpace^s}\widetilde{J}(T,(K_k,\Phi_k)_{k=1}^{N_K})
\label{eq:originalOptimalControl}
\end{equation}
subject to the constraints~\eqref{eq:boxConstraints} and \eqref{eq:kernelConstraints} as well as the nonlinear autonomous initial value problem (Cauchy problem) representing the state equation
\begin{align}
\dot{\tilde{x}}(t)=&f(\tilde{x}(t),(K_k,\Phi_k)_{k=1}^{N_K})\notag\\
\coloneqq&-A^\top(A\tilde{x}(t)-b)-\sum_{k=1}^{N_K}K_k^\top\Phi_k(K_k\tilde{x}(t))
\label{eq:stateOrig}
\end{align}
for $t\in(0,T)$ and $\tilde{x}(0)=x_0$.
We refer to the minimizing time~$T$ in \eqref{eq:originalOptimalControl} as the \emph{optimal early stopping time}.
To better handle this optimal control problem, we employ the reparametrization $x(t)=\tilde{x}(tT)$, which results in the equivalent optimal control problem
\begin{equation}
\min_{T\in\R,K_k\in\R^{m\times n},\Phi_k\in\functionSpace^s}J(T,(K_k,\Phi_k)_{k=1}^{N_K})
\label{eq:objectiveEq}
\end{equation}
subject to \eqref{eq:boxConstraints}, \eqref{eq:kernelConstraints} and the transformed state equation
\begin{equation}
\dot{x}(t)=Tf(x(t),(K_k,\Phi_k)_{k=1}^{N_K}),\qquad x(0)=x_0
\label{eq:stateEq}
\end{equation}
for $t\in(0,1)$, where
\[
J(T,(K_k,\Phi_k)_{k=1}^{N_K})\coloneqq\frac{1}{2}\|x(1)-\groundTruth\|_2^2.
\]
\begin{figure}
\begin{center}
\includegraphics[width=.8\linewidth]{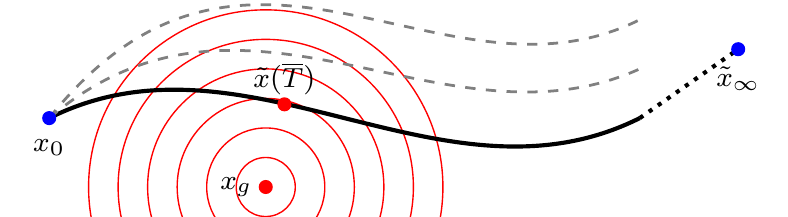}
\end{center}
\caption{Schematic drawing of optimal trajectory (black curve) as well as suboptimal trajectories (gray dashed curves) emanating from $x_0$ with ground truth~$\groundTruth$, optimal restored image~$\tilde{x}(\overline{T})$,
sink/stable node~$\tilde{x}_\infty$ and energy isolines (red concentric circles).}
\label{eq:trajectory}
\end{figure}
\begin{remark}
In the case of convex potential functions~$\rho$ with unbounded support, the autonomous differential equation~\eqref{eq:gradientFlow} is asymptotically stable.
\end{remark}
In the next theorem, we apply the direct method in the calculus of variations to prove the existence of minimizers for the optimal control problem.
\begin{theorem}[Existence of solutions]\label{thm:existenceSolution}
Let $s\geq 0$.
Then the minimum in \eqref{eq:objectiveEq} is attained.
\end{theorem}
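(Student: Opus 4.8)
The plan is to apply the direct method in the calculus of variations: take a minimizing sequence of admissible controls together with their associated state trajectories, extract convergent subsequences via suitable compactness, and verify that the limit is again admissible and realizes the infimum. The first step is to establish that the admissible control set is compact. The stopping time ranges over the compact interval $[0,\Tmax]$. The admissible kernels lie in $\{K:\alpha(K)\le 1\}\cap\{K:K\mathbf 1=0\}$, which is closed (by continuity of $\alpha$ and linearity of the mean condition) and bounded (by the coercivity $\alpha(K)\to\infty$ as $\|K\|\to\infty$), hence compact. Identifying $\functionSpace^s$ with $\R^{N_w}$ through the weight vector $w$, the constraint $\beta(\Phi)\le 1$ likewise cuts out a compact set by continuity and coercivity of $\beta$. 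Thus from any minimizing sequence $(T^j,(K_k^j,\Phi_k^j)_k)$ I can extract a subsequence (not relabeled) converging to some admissible $(\bar T,(\bar K_k,\bar\Phi_k)_k)$; in particular the weight vectors converge, $w^{(k),j}\to\bar w^{(k)}$, which yields uniform convergence $\phi_k^j\to\bar\phi_k$ on all of $\R$.

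The decisive structural fact for the states is that each $\Phi_k$ is uniformly bounded: the basis functions $\psi_1,\dots,\psi_{N_w}$ have compact support in $\overline I$, so $\phi=\sum_j w_j\psi_j$ is bounded, uniformly over the set $\{\beta(\Phi)\le 1\}$. Consequently the right-hand side $Tf$ of the state equation~\eqref{eq:stateEq} has at most linear growth in $x$ with constants independent of the admissible control: the affine term $A^\top(Ax-b)$ contributes linear growth, while $\sum_k K_k^\top\Phi_k(K_k x)$ stays uniformly bounded. A Grönwall estimate applied to the integral form $x(t)=x_0+\int_0^t T\,f(x(s),(K_k,\Phi_k)_k)\dx s$ then furnishes a uniform a priori bound $\sup_{t\in[0,1]}\|x^j(t)\|\le C$ across the whole sequence. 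Feeding this back into the equation bounds $\dot x^j$ uniformly, so the $x^j$ are uniformly bounded and equi-Lipschitz; by Arzel\`a--Ascoli a further subsequence converges uniformly, $x^j\to\bar x$ in $C^0([0,1],\R^n)$.

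It remains to pass to the limit in both the state equation and the cost. Writing each trajectory in integral form and using the uniform convergence $x^j\to\bar x$, the convergence of the controls, and the continuity of $f$ in all its arguments, I can identify the limit. Here the uniform convergence $\phi_k^j\to\bar\phi_k$ together with $K_k^j x^j(s)\to\bar K_k\bar x(s)$ gives the pointwise convergence $\Phi_k^j(K_k^j x^j(s))\to\bar\Phi_k(\bar K_k\bar x(s))$, and dominated convergence (justified by the uniform bounds) passes this through the integral. Hence $\bar x$ solves~\eqref{eq:stateEq} driven by $(\bar T,(\bar K_k,\bar\Phi_k)_k)$ with $\bar x(0)=x_0$, so $(\bar T,(\bar K_k,\bar\Phi_k)_k,\bar x)$ is admissible. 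Since $J=\tfrac12\|x(1)-\groundTruth\|_2^2$ depends continuously on the endpoint and $x^j(1)\to\bar x(1)$, the cost converges, and the limit attains the infimum.

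The main obstacle is the low regularity permitted by $s\ge 0$: when $s=0$ the activations $\phi$ are merely continuous, $f$ need not be Lipschitz in $x$, and uniqueness of the state trajectory may fail, so a single-valued control-to-state map is unavailable. I therefore phrase the entire argument in terms of admissible control--state \emph{pairs}, invoking only Peano existence to guarantee that each control in the minimizing sequence admits at least one trajectory, and passing to the limit through the integral equation as above. The compact support of the basis functions is what rescues this approach, since it delivers the uniform bounds and equicontinuity that Arzel\`a--Ascoli requires even in the non-Lipschitz regime; the delicate point to verify carefully is precisely the continuity of the nonlinear term $\Phi_k(K_k x)$ under simultaneous convergence of states and controls.
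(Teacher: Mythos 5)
Your proposal is correct and follows essentially the same route as the paper's proof: the direct method with compactness of the controls obtained from coercivity of $\alpha,\beta$ and the finite dimensionality of $\functionSpace^s$, a priori bounds on the states via linear growth of the right-hand side and Gronwall, uniform convergence of the state trajectories, limit passage in the state equation, and continuity of the endpoint cost. The only cosmetic differences are that you invoke Arzel\`a--Ascoli on the uniformly bounded, equi-Lipschitz states where the paper uses weak $H^1((0,1),\R^n)$ compactness together with the compact embedding into $C^0([0,1],\R^n)$, and you pass to the limit in the integral form of the equation rather than distributionally; your explicit handling of possible non-uniqueness at $s=0$ via Peano existence and control--state pairs makes precise a point the paper treats only implicitly.
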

\begin{proof}
Without restriction, we solely consider the case $N_K=1$ and omit the subscript.

Let $(T^i,K^i,\Phi^i)\in\R\times\R^{m\times n}\times\functionSpace^s$ be a minimizing sequence for~$J$ with an associated state~$x^i\in C^1([0,1],\R^n)$ such that \eqref{eq:boxConstraints}, \eqref{eq:kernelConstraints} and \eqref{eq:stateEq} hold true (the existence of~$x^i$ is verified below).
The coercivity of $\alpha$ and $\beta$ implies $\|K^i\|\leq C_\alpha$ and $\|\Phi^i\|\leq C_\beta$ for fixed constants $C_\alpha,C_\beta>0$.
Due to the finite dimensionality of $\functionSpace$ and the boundedness of $\|\Phi^i\|\leq C_\beta$ 
we can deduce the existence of a subsequence (not relabeled) such that $\Phi^i\to\Phi\in\functionSpace$.
In addition, using the bounds $T^i\in[0,\Tmax]$ and $\|K^i\|\leq C_\alpha$ we can pass to further subsequences if necessary to deduce
$(T^i,K^i)\to(T,K)$ for suitable $(T,K)\in[0,\Tmax]\times\R^{m\times n}$ such that $\|K\|\leq C_\alpha$.
The state equation~\eqref{eq:stateEq} implies
\begin{align*}
\|\dot{x}^i(t)\|_2
\leq&T^i\|K^i\|\|\Phi^i\|+T^i\|A\|_F(\|A\|_F\|x^i(t)\|_2+\|b\|_2)\\
\leq&\Tmax C_\alpha C_\beta+\Tmax\|A\|_F(\|A\|_F\|x^i(t)\|_2+\|b\|_2).
\end{align*}
This estimate already guarantees that $[0,1]$ is contained in the maximum domain of existence of the state equation due to the linear growth of the right-hand side in $x^i$ \cite[Theorem~2.17]{Te12}.
Moreover, Gronwall's inequality~\cite{Ha80,Te12} ensures the uniform boundedness of $\|x^i(t)\|_2$ for all $t\in[0,1]$ and all $i\in\N$, which in combination with
the above estimate already implies the uniform boundedness of $\|\dot{x}^i(t)\|_2$.
Thus, by passing to a subsequence (again not relabeled) we infer that $x\in H^1((0,1),\R^n)$ exists such that $x(0)=x_0$ (the pointwise evaluation is possible due to the Sobolev embedding theorem),
$x_i\rightharpoonup x$ in $H^1((0,1),\R^n)$ and $x_i\to x$ in $C^0([0,1],\R^n)$.
In addition, we obtain 
\[
\|T^i(K^i)^\top\Phi^i(K^i x^i(t))-TK^\top\Phi(Kx(t))\|_{C^0([0,1])}\to 0
\]
as $i\to\infty$ and $\dot{x}(t)=-TK^\top\Phi(Kx(t))-TA^\top(Ax(t)-b)$ holds true in a weak sense~\cite{Ha80}.
However, due to the continuity of the right-hand side we can even conclude $x\in C^1([0,1],\R^n)$ \cite[Chapter~I]{Ha80}.
Finally, the theorem follows from the continuity of $J$ along this minimizing sequence.
\end{proof}
In the next theorem, a first order necessary condition for the optimal stopping time is derived.
\begin{theorem}[First order necessary condition for optimal stopping time]\label{thm:firstOrderCondition}
Let $s\geq 1$.
Then for each stationary point~$(\overline{T},(\overline{K}_k,\overline{\Phi}_k)_{k=1}^{N_K})$ of~$J$
with associated state~$\overline{x}$ such that \eqref{eq:boxConstraints}, \eqref{eq:kernelConstraints} and \eqref{eq:stateEq} are valid the equation
\begin{equation}
\int_0^1\langle\overline{p}(t),\dot{\overline{x}}(t)\rangle\dx t=0\label{eq:optimalT}
\end{equation}
holds true.
Here, $\overline{p}\in C^1([0,1],\R^n)$ denotes the adjoint state of $\overline{x}$, which is given as the solution to the ordinary differential equation
\begin{equation}
\dot{\overline{p}}(t)=\sum_{k=1}^{N_K}\overline{T}\,\overline{K}_k^\top D\overline{\Phi}_k(\overline{K}_k\overline{x}(t))\overline{K}_k\overline{p}(t)+\overline{T}A^\top A\overline{p}(t)
\label{eq:adjointEq}
\end{equation}
with terminal condition
\begin{equation}
\overline{p}(1)=\groundTruth-\overline{x}(1).
\label{eq:terminalCondition}
\end{equation}
\end{theorem}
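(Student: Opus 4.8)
The plan is to derive \eqref{eq:optimalT} as a first-order necessary condition through a Lagrangian (adjoint) formulation, treating $T$ as a free control and $x$ as constrained by the state equation \eqref{eq:stateEq}. First I would introduce the Lagrangian
\[
\mathcal{L}=\frac{1}{2}\|x(1)-\groundTruth\|_2^2+\int_0^1\langle p(t),\dot{x}(t)-Tf(x(t),(K_k,\Phi_k)_{k=1}^{N_K})\rangle\dx t,
\]
with an adjoint state $p\in C^1([0,1],\R^n)$ serving as the multiplier for the differential constraint. Before differentiating, I would record that the hypothesis $s\geq 1$ guarantees $\Phi_k\in C^1$, so that $D\Phi_k$ is a continuous (diagonal) matrix field; combined with the bounds already established in Theorem~\ref{thm:existenceSolution}, standard smooth-dependence results for ODEs (as in \cite[Theorem~2.17]{Te12}) yield Fr\'echet differentiability of the control-to-state map $T\mapsto x(\cdot;T)$ and well-posedness of the linearized and adjoint equations.

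Next I would derive the adjoint system by imposing stationarity of $\mathcal{L}$ with respect to $x$. Taking a variation $\delta x$ with $\delta x(0)=0$ and integrating the term $\langle p,\dot{\delta x}\rangle$ by parts produces a boundary contribution at $t=1$ and an interior term $-\int_0^1\langle\dot{p},\delta x\rangle\dx t$. Collecting the coefficients of $\delta x(1)$ forces the terminal condition $p(1)=\groundTruth-x(1)$, that is \eqref{eq:terminalCondition}, while the vanishing of the interior integral for all admissible $\delta x$ yields $\dot{p}=-T(D_xf)^\top p$. Since $D_xf=-A^\top A-\sum_k K_k^\top D\Phi_k(K_kx)K_k$ with $A^\top A$ and the diagonal $D\Phi_k$ both symmetric, the transpose equals $D_xf$ itself, and substituting reproduces exactly \eqref{eq:adjointEq}. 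The $C^1$-regularity of $\overline{p}$ then follows from the continuity of the right-hand side of this adjoint ODE by the same bootstrap argument used in the existence proof.

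Finally I would compute the derivative of the reduced cost with respect to $T$. By the envelope property, once $x$ solves \eqref{eq:stateEq} and $p$ solves the adjoint system (so that the $x$-variation of $\mathcal{L}$ vanishes), the total derivative of $J$ in $T$ reduces to the partial derivative of $\mathcal{L}$ in its explicit $T$-dependence, namely $\frac{d}{dT}J=-\int_0^1\langle p,f(x)\rangle\dx t$. At a stationary point this expression vanishes, hence $\int_0^1\langle\overline{p},f(\overline{x})\rangle\dx t=0$; multiplying by $\overline{T}$ and invoking $\dot{\overline{x}}=\overline{T}f(\overline{x})$ turns this identity into \eqref{eq:optimalT}. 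I expect the main obstacles to be twofold: first, the rigorous justification of differentiability of $T\mapsto x(\cdot;T)$ together with the regularity of the adjoint, where the upgrade from $s\geq 0$ to $s\geq 1$ is essential; and second, the treatment of the box constraint $0\leq T\leq\Tmax$, since the clean equality holds at an interior stationary point, whereas an active constraint would only furnish a one-sided inequality. I would therefore either restrict attention to interior optima or observe that the degenerate case $\overline{T}=0$ makes \eqref{eq:optimalT} trivially true, as then $\dot{\overline{x}}\equiv 0$.
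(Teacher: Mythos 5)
Your proposal is correct, and its computational core coincides with the paper's: the same Lagrangian $J+\int_0^1\langle p,\dot{x}-Tf(x)\rangle\dx t$, the terminal condition \eqref{eq:terminalCondition} and adjoint equation \eqref{eq:adjointEq} extracted from the $x$-variation (using, exactly as you note, that $A^\top A$ and the diagonal matrices $D\Phi_k$ are symmetric), and the final passage from $T$-stationarity, $\int_0^1\langle\overline{p},f(\overline{x})\rangle\dx t=0$, to \eqref{eq:optimalT} via $\dot{\overline{x}}=\overline{T}f(\overline{x})$, with $\overline{T}=0$ trivial. The genuine difference is how the multiplier is legitimized. The paper keeps $x$ as an independent unknown, collects the state equation, the initial condition and the constraints $\alpha(K)\leq 1$, $\beta(\Phi)\leq 1$, $K\mathbf{1}=0$ into a single constraint map $G$, and invokes the abstract Lagrange multiplier theorem of \cite{ItKu08,Ze85}; the substantive step there is the verification of the regular-point condition \eqref{eq:regularityMultiplier}, done by proving surjectivity of the linearized state operator $DG_1(\overline{z})$ through a linear-growth argument. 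You instead eliminate the state: $\overline{p}$ is constructed directly as the solution of the backward linear ODE (well posed and $C^1$ since $s\geq 1$ makes its coefficients continuous along $\overline{x}$), and the reduced-gradient formula $\frac{\dx}{\dx T}J=-\int_0^1\langle p,f(x)\rangle\dx t$ is justified by an envelope/sensitivity argument resting on differentiability of $T\mapsto x(\cdot\,;T)$. Your route is more elementary, needs no constraint qualification, and specializes immediately to Corollary~\ref{cor:firstOrderOptimality}; the paper's route buys, with the same Lagrangian, the full system of first-order conditions including those with respect to $K_k$ and $\Phi_k$ — which is why it carries the kernel and activation constraints that you may safely omit, as they involve neither $x$ nor $T$. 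Two small repairs to your write-up: \cite[Theorem~2.17]{Te12} is the global-existence (linear-growth) result that the paper cites for well-posedness, not a smooth-dependence theorem, so the differentiability of the control-to-state map needs its own reference or a short Gronwall-type proof; and your observation that $s\geq 1$ already yields $\overline{p}\in C^1$ is right (the paper's bootstrap step nominally requires $s\geq 2$, more than the theorem's hypothesis, although only continuity of $D\overline{\Phi}$ is actually used). Finally, both proofs share the caveat you flag: \eqref{eq:optimalT} as an equality presupposes that stationarity means a vanishing $T$-derivative, i.e.\ that the box constraint on $T$ is inactive.
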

\begin{proof}
Again, without loss of generality we restrict to the case $N_K=1$ and omit the subscript.
Let $\overline{z}=(\overline{x},\overline{T},\overline{K},\overline{\Phi})\in\mathcal{Z}\coloneqq H^1((0,1))\times[0,\Tmax]\times\R^{m\times n}\times\functionSpace^s$ be a stationary point of $J$,
which exists due to Theorem~\ref{thm:existenceSolution}.
The constraints~\eqref{eq:stateEq}, \eqref{eq:boxConstraints} and \eqref{eq:kernelConstraints} can be written as 
\[
G(x,T,K,\Phi)\in\mathcal{C}\coloneqq\{0\}\times\{0\}\times\R_0^-\times\R_0^-\times\{0\},
\]
where $G:\mathcal{Z}\to\mathcal{P}\coloneqq L^2((0,1),\R^n)\times\R^n\times\R\times\R\times\R^m$ (note that $L^2((0,1),\R^n)^\top\cong L^2((0,1),\R^n)$) is given by
\[
G(x,T,K,\Phi)=
\begin{pmatrix}
\dot{x}+TK^\top\Phi(Kx)+TA^\top(Ax-b)\\
x(0)-x_0\\
\alpha(K)-1\\
\beta(\Phi)-1\\
K\mathbf{1}
\end{pmatrix}.
\]
For multipliers in the space $\mathcal{P}$ we consider the associated Lagrange functional~$L:\mathcal{Z}\times\mathcal{P}\to\R$
to minimize~$J$ incorporating the aforementioned constraints, i.e.~for $z=(x,T,K,\Phi)\in\mathcal{Z}$ and $p\in\mathcal{P}$ we have
\begin{equation}
L(z,p)=J(T,K,\Phi)+\int_0^1\langle p_1,G_1(z)\rangle\dx t+\sum_{i=2}^5\langle p_i,G_i(z)\rangle.
\label{eq:Lagrangian}
\end{equation}
Following \cite{ItKu08,Ze85}, the Lagrange multiplier~$\overline{p}$ exists if $J$ is Fr\'echet differentiable
at $\overline{z}$, $G$ is continuously Fr\'echet differentiable at $\overline{z}$ and $\overline{z}$ is regular, i.e.
\begin{equation}
0\in\operatorname{int}\left\{DG(\overline{z})(\mathcal{Z}-\overline{z})+G(\overline{z})-\mathcal{C}\right\}.
\label{eq:regularityMultiplier}
\end{equation}
The (continuous) Fr\'echet differentiability of $J$ and $G$ at~$\overline{z}$ can be proven in a straightforward manner. 
To show \eqref{eq:regularityMultiplier}, we first prove the surjectivity of $DG_1(\overline{z})$.
For any $z=(x,T,K,\Phi)\in\mathcal{Z}$ we have
\begin{align*}
DG_1(\overline{z})(z)=&\dot{x}+\overline{T}\,\overline{K}^\top D\overline{\Phi}(\overline{K}\overline{x})\overline{K}x+\overline{T}A^\top Ax
+T\overline{K}^\top\overline{\Phi}(\overline{K}\overline{x})
+TA^\top(A\overline{x}-b)\\
&+\overline{T}K^\top\overline{\Phi}(\overline{K}\overline{x})+\overline{T}\,\overline{K}^\top D\overline{\Phi}(\overline{K}\overline{x})K\overline{x}
+\overline{T}\,\overline{K}^\top\Phi(\overline{K}\overline{x}).
\end{align*}
The surjectivity of $DG_1(\overline{z})$ with initial condition given by $\overline{x}(0)=x_0$ follows from the linear growth in $x$, which implies that the maximum domain of existence coincides with~$\R$.
This solution is in general only a solution in the sense of Carath\'eodory~\cite{Ha80,Te12}.
Since $\alpha$ and $\beta$ have non-vanishing derivatives, the validity of \eqref{eq:regularityMultiplier} and thus the existence of the Lagrange multiplier follows.

The first order optimality conditions with test functions $x\in H^1((0,1),\R^n)$, $K\in\R^{m\times n}$, $\Phi\in\functionSpace^s$ and $p\in\mathcal{P}$ read as
\begin{align}
D_{x}L(\overline{x},\overline{T},\overline{K},\overline{\Phi},\overline{p})(x)
=&\langle\overline{x}(1)-\groundTruth,x(1)\rangle+\langle\overline{p}_2,x(0)\rangle\label{eq:opX}\\
&+\int_0^1\langle\overline{p}_1,\dot{x}+\overline{T}\,\overline{K}^\top D\overline{\Phi}(\overline{K}\overline{x})\overline{K}x+\overline{T}A^\top Ax\rangle\dx t=0,\notag\\
\frac{\dx}{\dx T}L(\overline{x},\overline{T},\overline{K},\overline{\Phi},\overline{p})
=&\int_0^1\langle\overline{p}_1,\overline{K}^\top\overline{\Phi}(\overline{K}\overline{x})+A^\top(A\overline{x}-b)\rangle\dx t=0,\label{eq:opT}\\
D_{K}L(\overline{x},\overline{T},\overline{K},\overline{\Phi},\overline{p})(K)
=&\int_0^1\langle\overline{p}_1,\overline{T}\,K^\top\overline{\Phi}(\overline{K}\overline{x})+\overline{T}\,\overline{K}^\top D\overline{\Phi}(\overline{K}\overline{x})K\overline{x}\rangle\dx t\notag\\
&+\langle\overline{p}_3,D\alpha(\overline{K})(K)\rangle+\langle\overline{p}_5,K\mathbf{1}\rangle=0,\notag\\
D_{\Phi}L(\overline{x},\overline{T},\overline{K},\overline{\Phi},\overline{p})(\Phi)
=&\int_0^1\langle\overline{p}_1,\overline{T}\,\overline{K}^\top\Phi(\overline{K}\overline{x})\rangle\dx t+\langle\overline{p}_4,D\beta(\overline{\Phi})(\Phi)\rangle=0,\notag\\
D_{p}L(\overline{x},\overline{T},\overline{K},\overline{\Phi},\overline{p})(p)
=&\int_0^1\langle p_1,G_1(\overline{z})\rangle\dx t+\sum_{i=2}^5\langle p_i,G_i(\overline{z})\rangle=0.\label{eq:opP}
\end{align}
The fundamental lemma of calculus of variations yields in combination with \eqref{eq:opX} and \eqref{eq:opP} for $t\in(0,1)$
\begin{align}
\dot{\overline{x}}(t)=&-\overline{T}\,\overline{K}^\top\overline{\Phi}(\overline{K}\overline{x}(t))-\overline{T}A^\top(A\overline{x}(t)-b),\label{eq:stateEqI}\\
\overline{x}(0)=&x_0,\notag\\
\dot{\overline{p}}_1(t)=&\overline{T}\,\overline{K}^\top D\overline{\Phi}(\overline{K}\overline{x}(t))\overline{K}\overline{p}_1(t)+\overline{T}A^\top A\overline{p}_1(t),\label{eq:stateEqII}\\
\overline{p}_1(1)=&\groundTruth-\overline{x}(1)\notag
\end{align}
in a distributional sense.
Since the right-hand sides of \eqref{eq:stateEqI} and \eqref{eq:stateEqII} are continuous if $s\geq 2$, we can conclude $\overline{x},\overline{p}\in C^1([0,1],\R^n)$ \cite{Ha80,Te12} and hence \eqref{eq:stateEqII} holds in the classical sense.
Finally, \eqref{eq:opT} and \eqref{eq:opP} imply
\begin{equation}
\frac{\dx}{\dx T}L(\overline{x},\overline{T},\overline{K},\overline{\Phi},\overline{p})=-\frac{1}{\overline{T}}\int_0^1\langle\overline{p}_1,\dot{\overline{x}}\rangle\dx t=0,
\label{eq:FOC}
\end{equation}
which proves \eqref{eq:optimalT} if $\overline{T}>0$ (the case $\overline{T}=0$ is trivial). 
\end{proof}
The preceding theorem can easily be adapted for fixed kernels and
activation functions leading to a reduced optimization problem with respect to the stopping time only:
\begin{corollary}[First order necessary condition for subproblem]\label{cor:firstOrderOptimality}
Let $\overline{K}_k\in\R^{m\times n}$ and $\overline{\Phi}_k\in \functionSpace^s$ for $k=1,\ldots,N_K$ be fixed with $s\geq 1$ satisfying \eqref{eq:boxConstraints} and \eqref{eq:kernelConstraints}.
We denote by $\overline{p}$ the adjoint state~\eqref{eq:adjointEq}.
Then, for each stationary point~$\overline{T}$ of the subproblem
\begin{equation}
T\mapsto J(T,(\overline{K}_k,\overline{\Phi}_k)_{k=1}^{N_K}),
\label{eq:subproblem}
\end{equation}
in which the associated state~$\overline{x}$ satisfies \eqref{eq:stateEq}, the first order optimality condition~\eqref{eq:optimalT} holds true.
\end{corollary}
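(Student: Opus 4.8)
The plan is to treat the corollary as a restriction of Theorem~\ref{thm:firstOrderCondition} to variations in the stopping time alone, so that essentially only the $T$-component of the first order system needs to be revisited. First I would fix $(\overline{K}_k,\overline{\Phi}_k)_{k=1}^{N_K}$ as in the hypothesis and introduce the reduced cost functional $\hat{J}(T)\coloneqq J(T,(\overline{K}_k,\overline{\Phi}_k)_{k=1}^{N_K})$, whose stationary points are the object of interest. Since the kernels and activation functions are frozen, the only active equality constraint coupling $T$ to the state is the reparametrized Cauchy problem~\eqref{eq:stateEq}; the box and zero-mean constraints~\eqref{eq:boxConstraints} and \eqref{eq:kernelConstraints} on $\overline{K}_k,\overline{\Phi}_k$ are satisfied by assumption and play no role when differentiating with respect to $T$.

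Next I would reuse the Lagrangian from~\eqref{eq:Lagrangian}, now regarded as a function of $T$ and the multiplier $\overline{p}$ only, with $x$ slaved to $T$ through the state equation. The existence of the adjoint state $\overline{p}\in C^1([0,1],\R^n)$ solving~\eqref{eq:adjointEq} with terminal condition~\eqref{eq:terminalCondition} is inherited from the proof of Theorem~\ref{thm:firstOrderCondition}: the regularity condition needed for the multiplier hinges on the surjectivity of $DG_1(\overline{z})$, which was established there via the linear growth of the right-hand side in the state variable and therefore remains valid when $K$ and $\Phi$ are held fixed. With $\overline{p}$ in hand, the adjoint calculus yields $\hat{J}'(\overline{T})=\frac{\dx}{\dx T}L(\overline{x},\overline{T},\overline{K},\overline{\Phi},\overline{p})$, which is precisely the expression in~\eqref{eq:opT}.

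Finally I would substitute the state equation~\eqref{eq:stateEq}, which at the stationary point reads $\dot{\overline{x}}=-\overline{T}\,(\,\overline{K}^\top\overline{\Phi}(\overline{K}\overline{x})+A^\top(A\overline{x}-b)\,)$, into~\eqref{eq:opT} to trade the bracketed term for $-\tfrac{1}{\overline{T}}\dot{\overline{x}}$, exactly as in~\eqref{eq:FOC}. Stationarity of $\overline{T}$ then gives $-\tfrac{1}{\overline{T}}\int_0^1\langle\overline{p},\dot{\overline{x}}\rangle\dx t=0$, hence~\eqref{eq:optimalT} whenever $\overline{T}>0$, while the boundary case $\overline{T}=0$ is trivial. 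The one point demanding care is the justification that the total derivative of the reduced functional coincides with the partial $T$-derivative of the Lagrangian at the stationary point; this is the standard adjoint argument, and it is exactly here that the chain-rule contributions through the sensitivity $\partial_T x$ are absorbed by the adjoint equation~\eqref{eq:adjointEq}--\eqref{eq:terminalCondition}, so that no genuinely new estimate beyond those of Theorem~\ref{thm:firstOrderCondition} is required.
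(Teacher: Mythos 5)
Your proposal is correct and takes essentially the same approach as the paper: the paper presents Corollary~\ref{cor:firstOrderOptimality} as a direct adaptation of the Lagrangian proof of Theorem~\ref{thm:firstOrderCondition} with the kernels and activation functions frozen, which is exactly what you carry out, reusing the multiplier existence via the surjectivity of $DG_1$, recovering \eqref{eq:opT}, and substituting the state equation to pass to \eqref{eq:FOC} and hence \eqref{eq:optimalT}, with $\overline{T}=0$ trivial. Your closing remark that the sensitivity $\partial_T x$ is absorbed by the adjoint equation \eqref{eq:adjointEq}--\eqref{eq:terminalCondition} is precisely the mechanism implicit in the paper's reduction, so no new ingredient is needed.
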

\begin{remark}
Under the assumptions of Corollary~\ref{cor:firstOrderOptimality}, a re\-scaling argument reveals the identities for $t\in(0,1)$
\begin{align*}
\frac{\dx}{\dx T}\overline{x}(t)&=tf(\overline{x}(t),(\overline {K}_k,\overline{\Phi}_k)_{k=1}^{N_K}),\\
\frac{\dx}{\dx T}\overline{p}(t)&=\sum_{k=1}^{N_K}t\overline{K}_k^\top D\overline{\Phi}_k(\overline{K}_k\overline{x}(t))\overline{K}_k\overline{p}(t)+tA^\top A\overline{p}(t).    
\end{align*}
\end{remark}
We conclude this section with a second order sufficient condition for the partial optimization problem~\eqref{eq:subproblem}:
\begin{theorem}[Second order sufficient conditions for subproblem]\label{thm:secondOrderCondition}
Let $s\geq 2$.
Under the assumptions of Corollary~\ref{cor:firstOrderOptimality},
$\overline{T}\in(0,\Tmax)$ with associated state~$\overline{x}$ is a strict local minimum of~$T\mapsto J(T,(\overline{K}_k,\overline{\Phi}_k)_{k=1}^{N_K})$ if
a constant~$C>0$ exists such that
\begin{align}
&\int_0^1\sum_{k=1}^{N_K}\langle\overline{p},
\overline{T}\,\overline{K}_k^\top D^2\overline{\Phi}_k(\overline{K}_k\overline{x})(\overline{K}_kx,\overline{K}_kx)\notag\\
&+2\overline{K}_k^\top D\overline{\Phi}_k(\overline{K}_k\overline{x})\overline{K}_kx+2A^\top Ax\rangle\dx t+\langle x(1),x(1)\rangle\notag\\
\geq& C(1+\|x\|_{H^1((0,1),\R^n)}^2)
\label{eq:secondOrderFunctional}
\end{align}
for all $x\in C^1((0,1),\R^n)$ satisfying $x(0)=0$ and
\begin{equation}
\dot{x}=\sum_{k=1}^{N_K}\left(-\overline{T}\,\overline{K}_k^\top D\overline{\Phi}_k(\overline{K}_k\overline{x})\overline{K}_kx-\overline{K}_k^\top\overline{\Phi}_k(\overline{K}_k\overline{x})\right)
-\overline{T}A^\top Ax-A^\top(A\overline{x}-b).\label{eq:secondOrderConstraint}
\end{equation}
\end{theorem}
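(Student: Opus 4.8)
The plan is to reduce the assertion to the strict positivity of the second derivative of the scalar reduced cost $j(T)\coloneqq J(T,(\overline{K}_k,\overline{\Phi}_k)_{k=1}^{N_K})=\tfrac12\|\overline{x}(1)-\groundTruth\|_2^2$ at $\overline{T}$, and then to show that this second derivative coincides \emph{verbatim} with the left-hand side of \eqref{eq:secondOrderFunctional}. Because $s\geq 2$, the right-hand side $f$ of \eqref{eq:stateOrig} is twice continuously differentiable in its state argument, so standard results on smooth parameter dependence of solutions of ordinary differential equations guarantee that $t\mapsto x(t;T)$ is twice continuously differentiable in $T$; hence $j\in C^2$ near $\overline{T}$. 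First I would introduce the sensitivities $x_T\coloneqq\frac{\dx}{\dx T}\overline{x}$ and $x_{TT}\coloneqq\frac{\dx^2}{\dx T^2}\overline{x}$. Differentiating the reparametrized state equation \eqref{eq:stateEq} once in $T$ and using that $x_0$ is independent of $T$ shows that $x_T$ solves the linear variational equation $\dot{x}_T=f(\overline{x})+\overline{T}\,D_xf(\overline{x})x_T$ with $x_T(0)=0$, which is precisely \eqref{eq:secondOrderConstraint} with $x_T(0)=0$. In particular $x_T$ is an admissible test function in \eqref{eq:secondOrderFunctional}.

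Next I would differentiate $j$ twice, obtaining $j'(\overline{T})=\langle\overline{x}(1)-\groundTruth,x_T(1)\rangle$ and $j''(\overline{T})=\langle x_T(1),x_T(1)\rangle+\langle\overline{x}(1)-\groundTruth,x_{TT}(1)\rangle$. The first-order condition from Corollary~\ref{cor:firstOrderOptimality} gives $j'(\overline{T})=0$, and the terminal condition \eqref{eq:terminalCondition} lets me replace $\overline{x}(1)-\groundTruth$ by $-\overline{p}(1)$, so that $j''(\overline{T})=\langle x_T(1),x_T(1)\rangle-\langle\overline{p}(1),x_{TT}(1)\rangle$. The crux is to remove the unknown second sensitivity from the boundary term $\langle\overline{p}(1),x_{TT}(1)\rangle$. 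Differentiating the variational equation for $x_T$ once more in $T$ produces $\dot{x}_{TT}=2D_xf(\overline{x})x_T+\overline{T}\,D_x^2f(\overline{x})(x_T,x_T)+\overline{T}\,D_xf(\overline{x})x_{TT}$, again with vanishing initial datum $x_{TT}(0)=0$. I would then differentiate $t\mapsto\langle\overline{p}(t),x_{TT}(t)\rangle$, substitute the adjoint equation \eqref{eq:adjointEq} and this last equation, and integrate over $[0,1]$.

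The key observation is that the linearization $D_xf(\overline{x})=-A^\top A-\sum_k\overline{K}_k^\top D\overline{\Phi}_k(\overline{K}_k\overline{x})\overline{K}_k$ is symmetric, since each $D\overline{\Phi}_k$ is diagonal and $A^\top A$ is symmetric; together with the adjoint equation this forces the two $x_{TT}$-contributions to cancel, while the boundary term at $t=0$ vanishes because $x_{TT}(0)=0$. What survives is $\langle\overline{p}(1),x_{TT}(1)\rangle=\int_0^1\bigl(2\langle\overline{p},D_xf(\overline{x})x_T\rangle+\overline{T}\langle\overline{p},D_x^2f(\overline{x})(x_T,x_T)\rangle\bigr)\dx t$, an integral involving only $\overline{p}$ and $x_T$. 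Inserting this back, expanding $D_xf$ and $D_x^2f$ and collecting the contributions from $-A^\top A$ and from $-\sum_k\overline{K}_k^\top D^2\overline{\Phi}_k(\overline{K}_k\overline{x})(\overline{K}_k\cdot,\overline{K}_k\cdot)$, I expect $j''(\overline{T})$ to match exactly the left-hand side of \eqref{eq:secondOrderFunctional} evaluated at $x=x_T$. Since $x_T$ satisfies \eqref{eq:secondOrderConstraint} with $x_T(0)=0$, the hypothesis applies and yields $j''(\overline{T})\geq C(1+\|x_T\|_{H^1((0,1),\R^n)}^2)>0$. A second-order Taylor expansion of $j$ at $\overline{T}$, using $j'(\overline{T})=0$ and the continuity of $j''$, then shows that $\overline{T}$ is a strict local minimum.

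The main obstacle I anticipate is the adjoint bookkeeping in the third step: one must set up the variational equation for $x_{TT}$ correctly, keeping track of the inhomogeneous term $2D_xf(\overline{x})x_T+\overline{T}\,D_x^2f(\overline{x})(x_T,x_T)$ arising from the product and chain rules applied to $Tf(x;\cdot)$, and then verify after integration by parts against $\overline{p}$ that the second-order term in $\overline{\Phi}_k$ survives with the factor $\overline{T}$ while the first-order term appears with the factor $2$, with signs consistent with \eqref{eq:adjointEq} and \eqref{eq:terminalCondition}. The symmetry of $D_xf(\overline{x})$ is indispensable for the cancellation of the $x_{TT}$-contributions. A minor point is to note that the interior condition $\overline{T}\in(0,\Tmax)$ renders the box constraint on $T$ in \eqref{eq:boxConstraints} inactive, so no complementarity term enters and the reduced one-dimensional Taylor argument—free of any two-norm discrepancy since $T$ is scalar—is legitimate; the term $1+\|x_T\|_{H^1}^2$ in the coercivity estimate simply reflects the normalization of the control direction $\frac{\dx}{\dx T}$ to unit size.
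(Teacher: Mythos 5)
Your proposal is correct, and it takes a genuinely different route from the paper's proof. The paper never introduces sensitivities: it keeps the pair $z=(x,T)$ as the unknown in the Banach space $\mathcal{Z}=H^1((0,1),\R^n)\times(0,\Tmax)$, reuses the Lagrangian of Theorem~\ref{thm:firstOrderCondition} with fixed kernels and activation functions, computes $D^2J(\overline{T})(z,z)=\langle x(1),x(1)\rangle$ and $D^2G_1(\overline{z})(z,z)$, and invokes the abstract second-order sufficiency theorem for Lagrange multiplier problems \cite[Theorem~43.D]{Ze85}: strict local minimality follows from coercivity of $D^2J+\int_0^1\langle\overline{p}_1,D^2G_1\rangle\dx t$ over the kernel of the linearized constraints $DG_1(\overline{z})(z)=0$, $x(0)=0$; the stated form \eqref{eq:secondOrderFunctional}--\eqref{eq:secondOrderConstraint} then follows because that quadratic form is homogeneous of degree two in $z$ and the linearized constraint of degree one, which permits the normalization $T=1$ and yields the right-hand side $C(1+\|x\|_{H^1((0,1),\R^n)}^2)$. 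You instead reduce to the scalar map $j(T)$, derive the first and second sensitivity equations, eliminate $x_{TT}$ from $j''(\overline{T})$ by integrating $\frac{\dx}{\dx t}\langle\overline{p},x_{TT}\rangle$ against the adjoint equation \eqref{eq:adjointEq}, and identify $j''(\overline{T})$ with the left-hand side of \eqref{eq:secondOrderFunctional} at $x=x_T$; I checked the bookkeeping and it comes out right, including the factor $\overline{T}$ on the $D^2\overline{\Phi}_k$ term, the factor $2$ on the first-order terms, and the indispensable role of the symmetry of $D_xf(\overline{x})$ in cancelling the $x_{TT}$-contributions. The two arguments are tightly linked: since \eqref{eq:secondOrderConstraint} with $x(0)=0$ is an inhomogeneous linear ODE with fixed initial value, its solution is unique and equals your $x_T$, so the admissible set in the theorem is the singleton $\{x_T\}$, the kernel of the paper's linearized constraint is the line spanned by $(x_T,1)$, and after normalization the paper's coercivity condition is exactly your single inequality. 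What your route buys is an elementary, self-contained proof (no appeal to an abstract Banach-space theorem, no Lagrangian) that makes the one-dimensionality of the admissible variations explicit; what the paper's route buys is a framework that parallels its first-order analysis and would extend without structural change to second-order conditions jointly in all controls $(T,K_k,\Phi_k)$, where the constraint kernel is genuinely infinite-dimensional and a reduction to a scalar second derivative is no longer available.
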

\begin{proof}
As before, we restrict to the case $N_K=1$ and omit subscripts.
Let us denote by~$L$ the version of the Lagrange functional~\eqref{eq:Lagrangian} with fixed kernels and fixed activation functions
to minimize $J$ subject to the side conditions~$G_1(x,T)=G_2(x,T)=0$ as specified in Corollary~\ref{cor:firstOrderOptimality}.
Let $\overline{z}=(\overline{x},\overline{T})\in\mathcal{Z}\coloneqq H^1((0,1),\R^n)\times(0,\Tmax)$ be a local minimum of~$J$.
Furthermore, we consider arbitrary test functions $z_1=(x_1,T_1), z_2=(x_2,T_2)\in\mathcal{Z}$, where we endow the Banach space~$\mathcal{Z}$
with the norm $\|z\|_\mathcal{Z}^2\coloneqq\|x\|_{H^1((0,1),\R^n)}^2+|T|^2$ for $z=(x,T)\in\mathcal{Z}$.
Then,
\begin{align*}
D^2J(\overline{T})(z_1,z_2)=&\langle x_1(1),x_2(1)\rangle,\\
D^2G_1(\overline{z})(z_1,z_2)=
&\overline{T}\,\overline{K}^\top D^2\overline{\Phi}(\overline{K}\overline{x})(\overline{K}x_1,\overline{K}x_2)
+T_2\,\overline{K}^\top D\overline{\Phi}(\overline{K}\overline{x})\overline{K}x_1\\
&+T_1\,\overline{K}^\top D\overline{\Phi}(\overline{K}\overline{x})\overline{K}x_2
+T_2A^\top Ax_1+T_1A^\top Ax_2,\\
D^2G_2(\overline{x})(x_1,x_2)=&0.
\end{align*}
Following \cite[Theorem~43.D]{Ze85}, $J$ has a strict local minimum at $\overline{z}$
if the first order optimality conditions discussed in Corollary~\ref{cor:firstOrderOptimality} holds true and
a constant~$C>0$ exists such that
\begin{align}
&D^2J(\overline{T})(z,z)+\int_0^1\langle\overline{p}_1,D^2G_1(\overline{z})(z,z)\rangle\dx t\notag\\
=&
\langle x(1),x(1)\rangle+\int_0^1\langle\overline{p}_1,
\overline{T}\,\overline{K}^\top D^2\overline{\Phi}(\overline{K}\overline{x})(\overline{K}x,\overline{K}x)\notag\\
&+2T\,\overline{K}^\top D\overline{\Phi}(\overline{K}\overline{x})\overline{K}x+2TA^\top Ax
\rangle\dx t
\geq C\|z\|_\mathcal{Z}^2
\label{eq:secondOrderCondition}
\end{align}
for all $z=(x,T)\in\mathcal{Z}$ satisfying
\begin{align*}
DG_1(\overline{z})(z)=&\dot{x}+\overline{T}\,\overline{K}^\top D\overline{\Phi}(\overline{K}\overline{x})\overline{K}x+T\,\overline{K}^\top\overline{\Phi}(\overline{K}\overline{x})\\
&+\overline{T}A^\top Ax+T A^\top(A\overline{x}-b)=0
\end{align*}
and $DG_2(\overline{z})(z)=x(0)=0$.
The theorem follows from the homogeneity of order~$2$ in~$T$ in~\eqref{eq:secondOrderCondition}, which results in the modified condition~\eqref{eq:secondOrderConstraint}.
\end{proof}
\begin{remark}
All aforementioned statements remain valid when replacing the function space~$\functionSpace^s$ and the norm of the activation functions by suitable Sobolev spaces and Sobolev norms, respectively.
Moreover, all statements only require minor modifications if instead of the box constraints~\eqref{eq:boxConstraints} 
non-negative, coercive and differentiable functions of the norms of $T$, $K_k$ and $\Phi_k$ are added in the cost functional~$J$.
\end{remark}

\section{Time Discretization}\label{sec:timeDisc}
The optimal control problem with state equation originating from the gradient flow for the energy functional~$\mathcal{E}$ was analyzed in section~\ref{sec:timeCont}.
In this section, we prove that static variational networks can be derived from a time discretization of the state equation incorporating Euler's or Heun's method~\cite{At89,Bu08}.
To illustrate the concepts, we discuss the optimal control problem in~$\R^2$ using fixed kernels and activation functions in section~\ref{sub:toyProblem}.
Finally, a literature overview of alternative ways to derive variational networks as well as relations to other approaches are presented in section~\ref{sub:VNReview}.

Let $S\geq 2$ be a fixed \emph{depth}.
For a stopping time~$T\in\R$ we define the \emph{node points $t_s=\frac{s}{S}$} for $s=0,\ldots,S$.
Consequently, Euler's explicit method for the transformed state equation~$\eqref{eq:stateEq}$ 
with fixed kernels and fixed activation functions $\overline{\Theta}=((\overline {K}_k,\overline{\Phi}_k)_{k=1}^{N_K})$
reads as
\begin{equation}
x_{s+1}=x_s+\frac{T}{S}f(x_s,\overline{\Theta})
\label{eq:Euler}
\end{equation}
for $s=0,\ldots,S-1$ with $x_0=x(0)$.
The discretized ordinary differential equation~\eqref{eq:Euler} defines the evolution of the \emph{static variational network}.
We stress that this time discretization is closely related to residual neural networks with constant parameters in each layer.
Here, $x_s$ is an approximation of $x(t_s)$, the associated global error~$x(t_s)-x_s$ is bounded from above by
\[
\max_{s=0,\ldots,S}\|x(t_s)-x_s\|_2\leq\frac{CT}{S}
\]
with $C\coloneqq\frac{\left(e^{L_f}-1\right)\|f''\|_{C^0}}{2L_f}$, where $L_f$ denotes the Lipschitz constant of~$f$ \cite[Theorem~6.3]{At89}.
In general, this global error bound has a tendency to overestimate the actual global error.
Improved error bounds can either be derived by performing a more refined local error analysis, which solely results in a better constant~$C$,
or by using higher order Runge--Kutta methods.
One prominent example of an explicit Runge--Kutta scheme with a quadratic order of convergence is Heun's method~\cite{Bu08}, which is defined as
\begin{equation}
x_{s+1}=x_s+\frac{T}{2S}\left(f(x_s,\overline{\Theta})+f\left(x_s+\frac{T}{S}f(x_s,\overline{\Theta})\right)\right).    
\label{eq:Heun}
\end{equation}
We abbreviate the right-hand side of \eqref{eq:adjointEq} as follows:
\begin{align*}
g(x,p,(K_k,\Phi_k)_{k=1}^{N_K})
=\sum_{k=1}^{N_K}K_k^\top D\Phi_k(K_kx)K_kp+A^\top Ap.    
\end{align*}
The corresponding update schemes for the adjoint sta\-tes are given by
\begin{equation}
p_s=p_{s+1}-\frac{T}{S}g(x_{s+1},p_{s+1},\overline{\Theta})
\label{eq:EulerAdjoint}
\end{equation}
in the case of Euler's method and
\begin{equation}
p_s=p_{s+1}-\frac{T}{2S}\bigg(g(x_{s+1},p_{s+1},\overline{\Theta})
+g\bigg(x_s,p_{s+1}-\frac{T}{S}g(x_{s+1},p_{s+1},\overline{\Theta}),\overline{\Theta}\bigg)\bigg)
\label{eq:HeunAdjoint}
\end{equation}
in the case of Heun's method.
We remark that in general implicit Runge--Kutta schemes are not efficient due to the complex structure of the Field of Experts regularizer.

In all cases, we have to choose the step size~$\frac{T}{S}$ such that the explicit Euler scheme is stable~\cite{Bu08}, i.e.
\begin{equation}
\max_{i=1,\ldots,n}\left|1+\frac{T}{S}\lambda_i\right|\leq 1    
\label{eq:stabilityCondition}
\end{equation}
for all $s=0,\ldots,S$, where $\lambda_i$ denotes the $i^{th}$ eigenvalue of the Jacobian of either $f$ or $g$.
Note that this condition already implies the stability of Heun's method.
Thus, in the numerical experiments we need to ensure a constant ratio of the stopping time~$T$ and the depth~$S$ to satisfy~\eqref{eq:stabilityCondition}.

\subsection{Optimal control problem in $\R^2$}\label{sub:toyProblem}
In this subsection, we apply the first and second order criteria for the partial optimal control problem (see Corollary~\ref{cor:firstOrderOptimality})
to the simple, yet illuminative example in~$\R^2$ with a single kernel, i.e.~$l=m=n=2$ and $N_K=1$.
More general applications of the early stopping criterion to image restoration problems are discussed in section~\ref{sec:results}.
Below, we consider a regularized data fitting problem composed of a squared $L^2$-data term and a nonlinear regularizer incorporating a forward finite difference matrix operator with respect to the $x$-direction.
In detail, we choose $\overline{\phi}(x)=\frac{x}{\sqrt{x^2 + 1}}$ and
\begin{align*}
x_0&=\begin{pmatrix} 1   \\ 2  \end{pmatrix},&
\groundTruth&=\begin{pmatrix} \frac{3}{2}\\ \frac{1}{2} \end{pmatrix},&
b&=\begin{pmatrix} 1\\ \frac{1}{2} \end{pmatrix},&\\
A &=\begin{pmatrix} 1 & 0\\0 & 1 \end{pmatrix},&
\overline{K}&=\begin{pmatrix} 1 & -1 \\ 0 & 0 \end{pmatrix}.&
\end{align*}
To compute the solutions of the state~\eqref{eq:stateEq} and the adjoint~\eqref{eq:adjointEq} differential equation, we use Euler's explicit method with $100$~equidistant steps.
All integrals are approximated using a Gaussian quadrature of order~$21$.
Furthermore, we optimize the stopping time~$\overline{T}$ in the discrete set $\mathcal{T}=0.05\cdot\N \cap[\frac{1}{10},3]$.

Figure~\ref{fig:toyProblem} (left) depicts all trajectories for $T\in\mathcal{T}$ (black curves) of the state equation emanating from~$x_0$ with sink/stable node~$x_\infty$.
The end points of the optimal trajectory and the ground truth state are marked by red points.
Moreover, the gray line indicates the trajectory of $\overline{x}_T+\overline{p}_T$ (the subscript denotes the solutions calculated with stopping time~$T$) associated with the optimal stopping time.
The dependency of the energy (red curve)
\[
T\mapsto J(T,\overline{K},\overline{\Phi})
\]
and of the first order condition~\eqref{eq:optimalT} (blue curve)
\[
T\mapsto-\frac{1}{T}\int_0^1\langle p_T,\dot{x}_T\rangle\dx t
\]
on the stopping time~$T$ is visualized in the right plot in Figure~\ref{fig:toyProblem}.
Note that the black vertical line indicating the optimal stopping time~$\overline{T}$ given by~\eqref{eq:optimalT} crosses the energy plot at the minimum point.
The function value of the second order condition~\eqref{eq:secondOrderFunctional} in Theorem~\ref{thm:secondOrderCondition} is $0.071$, which confirms that $\overline{T}$
is indeed a strict local minimum of the energy.
\begin{figure}[htb]
\includegraphics[width=\linewidth]{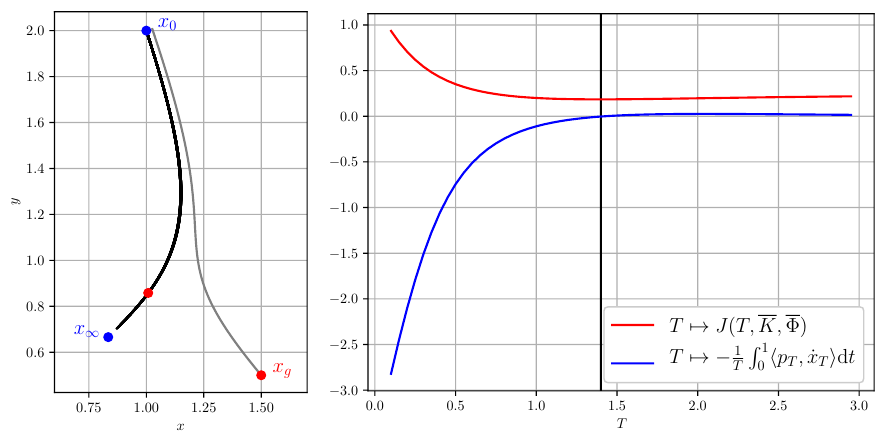}
\caption{Left: Trajectories of the state equation for~$T$ varying in~$\mathcal{T}$ (black curves), initial value~$x_0$ (blue point), sink/stable node~$x_\infty$ (blue point), ground truth state~$\groundTruth$ and
end point of optimal trajectory (red points).
 Right: function plots of the energy (red plot) and the first order condition (blue plot).}
\label{fig:toyProblem}
\end{figure}

\subsection{Alternative derivations of variational networks}\label{sub:VNReview}
We conclude this section with a brief review of alternative derivations of the defining equation~\eqref{eq:Euler} for variational networks.
Inspired by the classical nonlinear anisotropic diffusion model by Perona and Malik~\cite{PeMa90},
Chen and Pock~\cite{ChPo17} derive variational networks as discretized nonlinear reaction diffusion models of the form
$\frac{x_{s+1}-x_s}{h}=-\mathcal{R}[x_s]-\mathcal{D}[x_s]$ with an a priori fixed number of iterations, where $\mathcal{R}$ and $\mathcal{D}$ represent the reaction and diffusion terms, respectively,
that coincide with the first and second expression in \eqref{eq:stateEq}.
By exploiting proximal mappings this scheme can also be used for non-differentiable data terms~$\mathcal{D}$.
In the same spirit, Kobler et al.~\cite{KoKl17} related variational networks to incremental proximal and incremental gradient methods.
Following \cite{HaKl18}, variational networks result from a Landweber iteration~\cite{La51} of the energy functional~\eqref{eq:energyFunc} using the Field of Experts regularizer.
Structural similarities of variational networks and residual neural networks~\cite{HeZh16} are analyzed in \cite{KoKl17}.
In particular, residual neural networks (and thus also variational networks) are known to be less prone to the degradation problem,
which is characterized by a simultaneous increase of the training/test error and the model complexity.
Note that in most of these approaches time varying kernels and activation functions are examined.
In most of the aforementioned papers, the benefit of early stopping has been observed.

\section{Numerical Results for Image Restoration}\label{sec:results}
We examine the advantageousness of early stopping for image denoising and image deblurring using static variational networks in this section.
In particular, we show that the first order optimality condition results in the optimal stopping time.
We do not verify the second order sufficient condition discussed in Theorem~\ref{thm:secondOrderCondition} since in all experiments the first order condition
indicates an energy minimizing solution and thus this verification is not required.

\subsection{Image reconstruction problems}
In the case of \emph{image denoising}, we deteriorate a ground truth image~$\groundTruth\in\R^n$ by additive Gaussian noise
\[
n\sim\mathcal{N}(0,\sigma^2\Id)
\]
for a certain noise level~$\sigma$ resulting in the noisy input image~$g =\groundTruth+n$.
Consequently, the linear operator is given by the identity matrix and the corrupted image as well as the initial condition coincide with 
the noisy image, i.e.~$A=\Id$ and $b=x_0=g$.

For \emph{image deblurring}, we consider an input image $g=x_0=A\groundTruth+n\in\R^n$ that is corrupted by a Gaussian blur of the ground truth image~$\groundTruth\in\R^n$ and a Gaussian noise~$n$ with $\sigma=0.01$.
Here, $A\in\R^{n\times n}$ refers to the matrix representation of the $9\times 9$ normalized convolution filter with the blur strength~$\tau>0$ of the function
\[
(x,y)\mapsto\frac{1}{\sqrt{2\pi\tau^2}}\exp\left(-\frac{x^2+y^2}{2\tau^2}\right).
\]

\subsection{Numerical optimization}
For all image reconstruction tasks, we use the BSDS 500 data set~\cite{MaFo01} with gray-scale images in $[0,1]^{341\times421}$.
We train all models on 200 train and 200 test images from the BSDS 500 data set and evaluate the performance on 
68 validation images as specified by~\cite{RoBl09}.

In all experiments, the activation functions~\eqref{eq:functionSpace} are para\-me\-trized using $N_w=63$ quadratic B-spline basis functions~$\psi_j\in C^1(\R)$ 
with equidistant centers in the interval~$[-1,1]$.
Let $\xi\in\R^{n_1\times n_2}$ be the two-dimensional image of a corresponding data vector~$u\in\R^n$, $n=n_1\cdot n_2$.
Then, the convolution $\kappa\ast\xi$ of the image~$\xi$ with a filter~$\kappa$ is modeled by applying the corresponding kernel matrix~$K\in\R^{m\times n}$ to the data vector~$u$.
We only use kernels~$\kappa$ of size $7\times 7$.
Motivated by the relation $\|K\|_F=m\|\kappa\|_F$ we choose $\alpha(K)=\frac{1}{m^2}\|K\|_F^2$.
Additionally, we use $\beta(\Phi)=\beta(w)=\|w\|_2^2$ for a weight vector~$w$ associated with~$\Phi$.
Since all numerical experiments yield a finite optimal stopping time~$T$, we omit the constraint $T\leq\Tmax$.

For a given training set consisting of pairs of corrupted images~$x_0^i\in\R^n$ and corresponding ground truth images~$\groundTruth^i\in\R^n$, we denote the associated index set by~$\mathcal{I}$.
To train the model, we consider the discrete energy functional
\begin{align}
J_\mathcal{B}(T,(K_k,w_k)_{k=1}^{N_K})\coloneqq\frac{1}{|\mathcal{B}|}\sum_{i\in\mathcal{B}}\frac{1}{2}\|x_S^i-\groundTruth^i\|_2^2
\label{eq:training}
\end{align}
for a subset $\mathcal{B}\subset\mathcal{I}$, where $x_S^i$ denotes the terminal value of the Euler/Heun iteration scheme for the corrupted image~$x_0^i$.
In all numerical experiments, we use the iPALM algorithm~\cite{PoSa16} described in Algorithm~\ref{algo:optimization} to optimize all parameters with respect to a randomly selected batch~$\mathcal{B}$.
Each batch consists of $64$~image patches of size $96\times96$ that are uniformly drawn from the training data set.

For an optimization parameter~$q$ representing either $T$, $K_k$ or $w_k$, we use in the $l^{th}$~iteration step the over-relaxation
\[
\widetilde{q}^{[l]}=q^{[l]}+\frac{1}{\sqrt{2}}(q^{[l]}-q^{[l-1]}).
\]
We denote by $L_q$ the Lipschitz constant that is determined by backtracking and by $\operatorname{proj}_\mathcal{Q}$ the orthogonal projection onto the corresponding set denoted by~$\mathcal{Q}$.
\begin{algorithm}
\caption{iPALM algorithm for stochastic training of image restoration tasks for $L$~steps.}
\For{$l=1$ \KwTo $L$}{
\SetInd{1ex}{1ex}
randomly select batches $\mathcal{B}\subset\mathcal{I}$\;
update $x^i$ and $p^i$ for $i\in\mathcal{B}$ using either \eqref{eq:Euler}/\eqref{eq:EulerAdjoint} or \eqref{eq:Heun}/\eqref{eq:HeunAdjoint}\;
\For{$k=1$ \KwTo $N_K$}{
$K_k^{[l+1]}=\operatorname{proj}_{\mathcal{K}}\Big(\widetilde{K}_k^{[l]}-$ 
\hspace*{4em}$\frac{1}{L_K}  D_{K_k} J_{\mathcal{B}}(T^{[l]},(\widetilde{K}_k^{[l]},w_k^{[l]})_{k=1}^{N_K})\Big)$\;
}
\For{$k=1$ \KwTo $N_K$}{
$w_k^{[l+1]}=\operatorname{proj}_{\mathcal{W}}\Big(\widetilde{w}_k^{[l]}-$
\hspace*{4em}$\frac{1}{L_w}  D_{w_k} J_{\mathcal{B}}(T^{[l]},(K_k^{[l+1]},\widetilde{w}_k^{[l]})_{k=1}^{N_K})\Big)$\;
}
$T^{[l+1]}=\operatorname{proj}_{\R_0^+}\Big(\widetilde{T}^{[l]}-$
\hspace*{4em}$\frac{1}{L_T}  D_{T} J_{\mathcal{B}}(\widetilde{T}^{[l]},(K_k^{[l+1]},w_k^{[l+1]})_{k=1}^{N_K})\Big)$\;
}
\label{algo:optimization}
\end{algorithm}
Here, the constraint sets $\mathcal{K}$ and $\mathcal{W}$ are given by
\begin{align*}
\mathcal{K}&=\left\{K\in\R^{m\times n}:\alpha(K)\leq 1,K\mathbf{1}=0\right\},\\
\mathcal{W}&=\left\{w\in\R^{N_w}:\beta(w)\leq 1\right\}.
\end{align*}

Each component of the initial kernels~$K_k$ in the case of image denoising is independently drawn from a Gaussian random variable with mean~$0$ and variance~$1$ such that $K_k\in\mathcal{K}$.
The learned optimal kernels of the denoising task are incorporated for the initialization of the kernels for deblurring.
The weights~$w_k$ of the activation functions are initialized such that $\phi_k(y)\approx 0.1y$ around~$0$ for both reconstruction tasks.

\subsection{Results}
In the first numerical experiment, we train models for denoising and deblurring with $N_K=48$ kernels, a depth $S=10$ and $L=5000$ training steps.
Afterwards, we use the calculated parameters~$(K_k,\Phi_k)_{k=1}^{N_K}$ and $T$ as an initialization and train models for various depths $S=2,4,\ldots,50$ and $L=500$.
Figure~\ref{fig:denoisingEarlyStopping} depicts the average PSNR value~$\overline{\mathrm{PSNR}(x_S^i,x_g^i)}_{i\in\widehat{\mathcal{I}}}$ with $\widehat{\mathcal{I}}$ denoting the index set of the test images and the learned stopping time~$T$ as a function of the depth~$S$ for denoising (first two plots)
and deblurring (last two plots).
As a result, we observe that all plots converge for large~$S$, where the PSNR curve monotonically increases.
Moreover, the optimal stopping time~$T$ is finite in all these cases, which empirically validates that early stopping is beneficial.
Thus, we can conclude that beyond a certain depth~$S$ the performance increase in terms of PSNR is negligible and a proper choice of the optimal stopping time is significant.
The asymptotic value of~$T$ for increasing~$S$ in the case of image deblurring is approximately $20$~times larger compared to image denoising due to the structure of the deblurring operator~$A$.
Figure~\ref{fig:normsKernels} depicts the average $\ell^2$-difference of consecutive convolution kernels and activation functions for denoising and deblurring as a function of the depth~$S$.
We observe that the differences decrease with larger values of~$S$, which is consistent with the convergence of the optimal stopping time~$T$ for increasing~$S$.
Both time discretization schemes perform similar and thus in the following experiments we solely present results calculated with Euler's method due to advantages in the computation time.
\begin{figure}[htb]
\centering
\includegraphics[width=.5\linewidth]{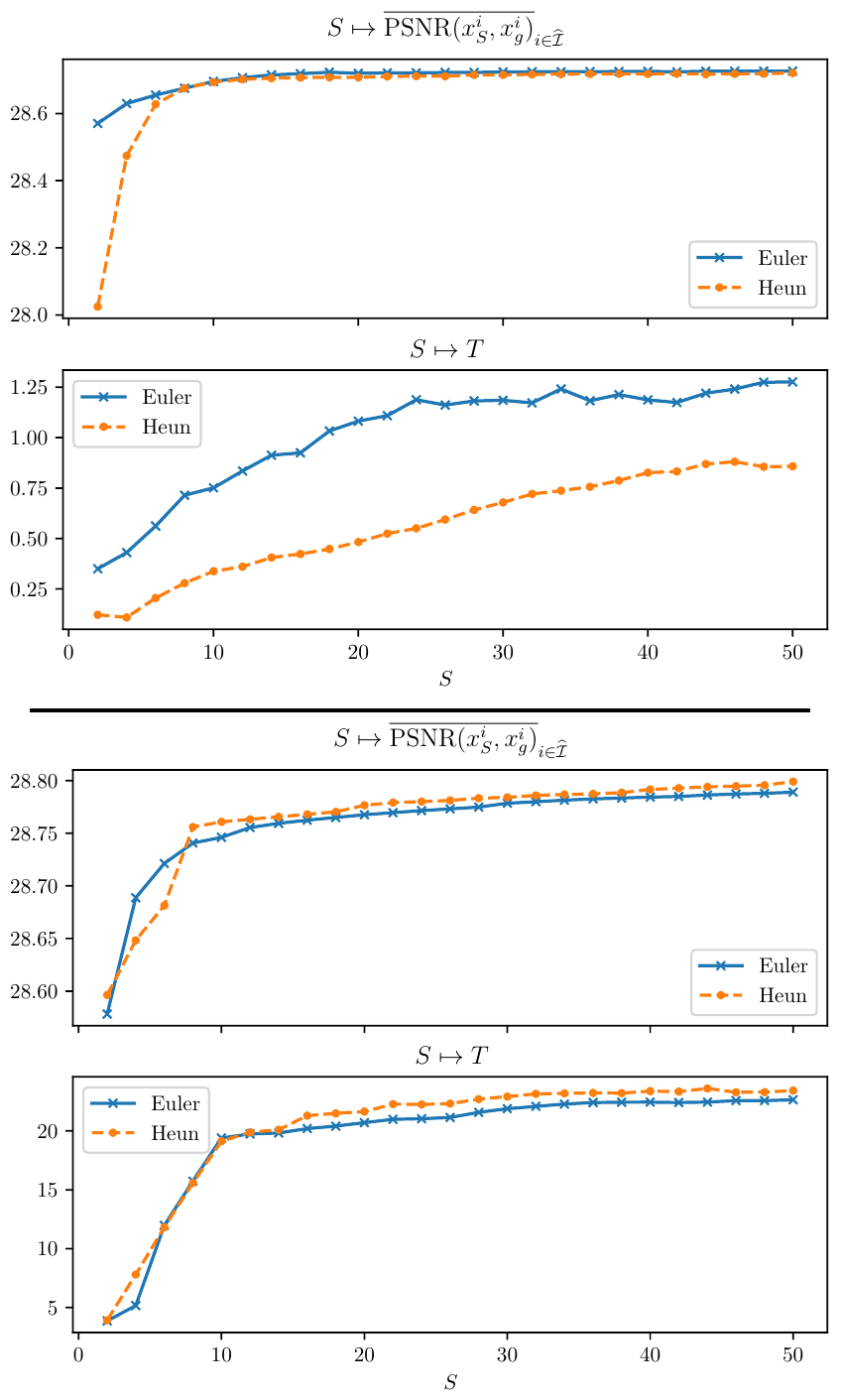}
\caption{Plots of the average PSNR value across the test set (first and third plot) as well as the learned optimal stopping time~$T$ (second and fourth plot) as a function of 
the depth~$S$ for denoising (top) and deblurring (bottom).
All plots show the results for the explicit Euler and explicit Heun schemes.}
\label{fig:denoisingEarlyStopping}
\end{figure}

\begin{figure}[htb]
\includegraphics[width=\linewidth]{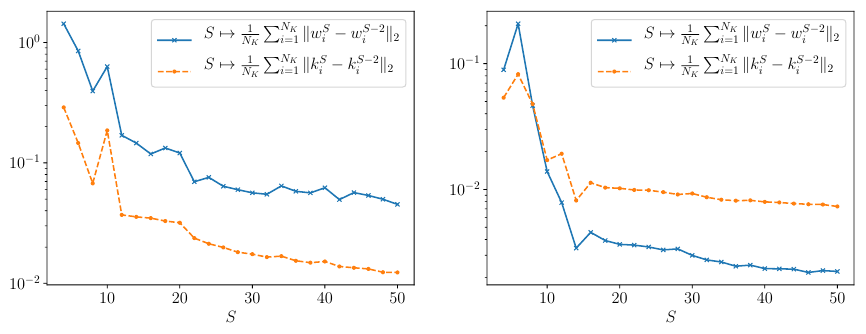}
\caption{Average change of consecutive convolution kernels (solid blue) and activation functions (dotted orange) for denoising (left) and deblurring (right) in terms of the $\ell^2$-norm.}
\label{fig:normsKernels}
\end{figure}

We next demonstrate the applicability of the first order condition for the energy minimization in static variational networks
using Euler's dis\-creti\-zation scheme with $S=20$.
Figure~\ref{fig:firstOrderCondition} depicts band plots along with the average curves among all training/test images of the functions
\begin{align}
T&\mapsto J_{\{i\}}(T,(\overline{K}_k, \overline{w}_k)_{k=1}^{N_K})\text{ and}\label{eq:energyCurvePartial}\\
T&\mapsto -\frac{1}{T}\int_0^1 \langle p_T^i, \dot{x}_T^i \rangle \dx t\notag
\end{align}
for all training and test images for denoising (first two plots) and deblurring (last two plots).
We approximate the integral in the first order condition~\eqref{eq:optimalT} via
\[
\int_0^1\langle p(t),\dot{x}(t)\rangle\dx t\approx\frac{1}{S+1}\sum_{s=0}^S\langle p_s,Tf(x_s,(K_k,\Phi_k)_{k=1}^{N_K})\rangle.
\]
We deduce that the first order condition for each test image indicates the energy minimizing stopping time.
Note that all image dependent stopping times are distributed around the average optimal stopping time that is highlighted by the black vertical line
and learned during training.
\begin{figure}[htb]
\centering
\includegraphics[width=.5\linewidth]{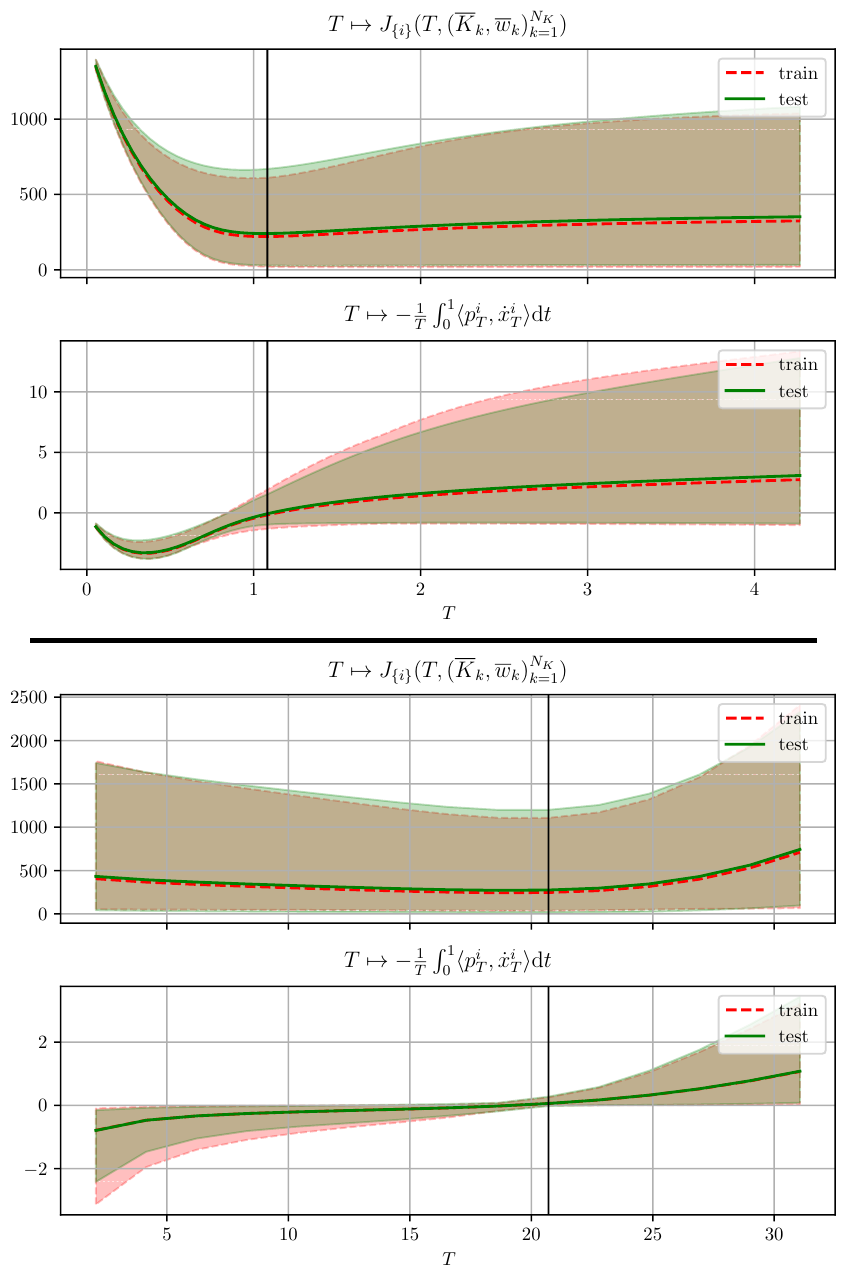}
\caption{Plots of the energies (first and third plot) and first order conditions (second and fourth plot) for training and test set with $\sigma=0.1$ for denoising (first pair of plots) and $\tau=1.5$/$\sigma=0.01$ for deblurring (last pair of plots).
The average value across the training/test sets are indicated by the dotted red/solid green curves.
The area between the minimal and maximal function value for each $T$ across the training/test set are indicated by the red dotted area and the green cross-hatched area, respectively.}
\label{fig:firstOrderCondition}
\end{figure}
Figure~\ref{fig:EulerResults} depicts two input images~$\groundTruth$ (first column), the corrupted images~$g$ (second column) as well as the denoised images for 
$T=\frac{\overline{T}}{2},\overline{T},\frac{\overline{3T}}{2},100\overline{T}$ (third to sixth column).
The maximum values of the PSNR values obtained for $T=\overline{T}$ are $29.68$ and $29.52$, respectively.
To ensure a sufficiently fine time discretization, we enforce $\frac{S}{T}=\mathrm{const}$, where for $T=\overline{T}$ we set $S=20$.
Likewise, Figure~\ref{fig:EulerResultsDeblur} contains the corresponding results for the deblurring task.
Again, we enforce a constant ratio of $S$ and $T$.
The PSNR value peaks around the optimal stopping time, the corresponding values are $29.52$ and $27.80$, respectively.
We observed an average computation time of $5.694~ms$ for the denoising and $8.687~ms$ for the deblurring task using a RTX 2080 Ti graphics card and the PyTorch machine learning framework.

As desired, $\overline{T}$ indicates the energy minimizing time, where both the average curves for the training and test sets nearly coincide, which proves that the model generalizes to unseen test images.
Although the gradient of the average energy curve~\eqref{eq:energyCurvePartial} is rather flat near the learned optimal stopping time, the proper choice of~$\overline{T}$
is indeed crucial as shown by the qualitative results in Figure~\ref{fig:EulerResults} and Figure~\ref{fig:EulerResultsDeblur}.
In the case of denoising, for $T<\overline{T}$ we still observe noisy images, whereas for too large~$T$ local image patterns are smoothed out.
For image deblurring, images computed with too small values of $T$ remain blurry, while for $T>\overline{T}$ ringing artifacts are generated and their intensity increase with larger~$T$.
For a corrupted image, the associated adjoint state requires the knowledge of the ground truth for the terminal condition~\eqref{eq:terminalCondition}, which is in general not available.
However, Figure~\ref{fig:firstOrderCondition} shows that the learned average optimal stopping time~$\overline{T}$ yields the smallest expected error.
Thus, for arbitrary corrupted images $\overline{T}$ is used as the stopping time.

\begin{figure*}[htb]
\includegraphics[width=\linewidth]{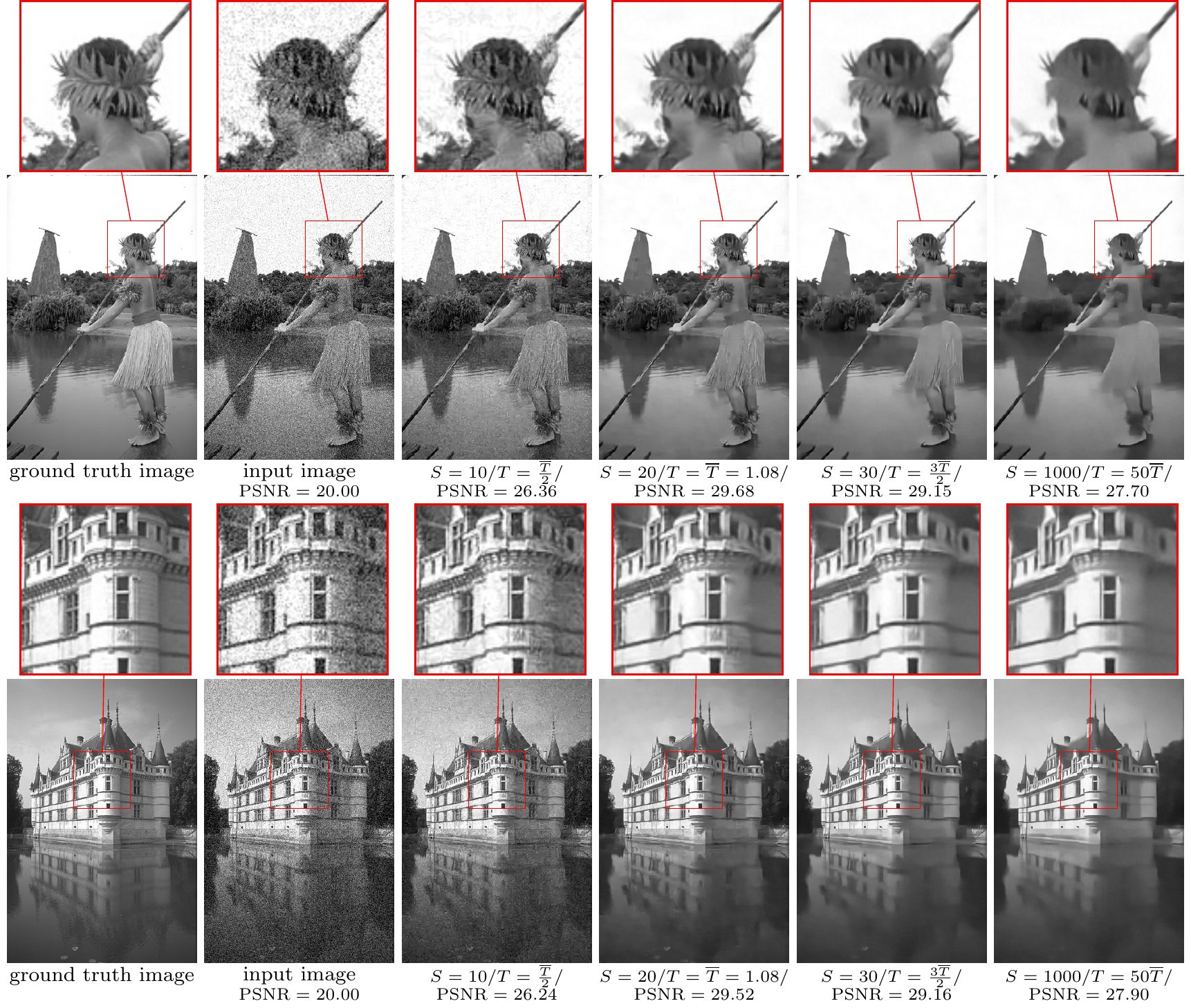}
\caption{From left to right: ground truth image, noisy input image ($\sigma=0.1$), restored images for $T=\frac{\overline{T}}{2},\overline{T},\frac{\overline{3T}}{2},50\overline{T}$ with $\overline{T}=1.08$ for image denoising.}
\label{fig:EulerResults}
\end{figure*}

\begin{figure*}[htb]
\includegraphics[width=\linewidth]{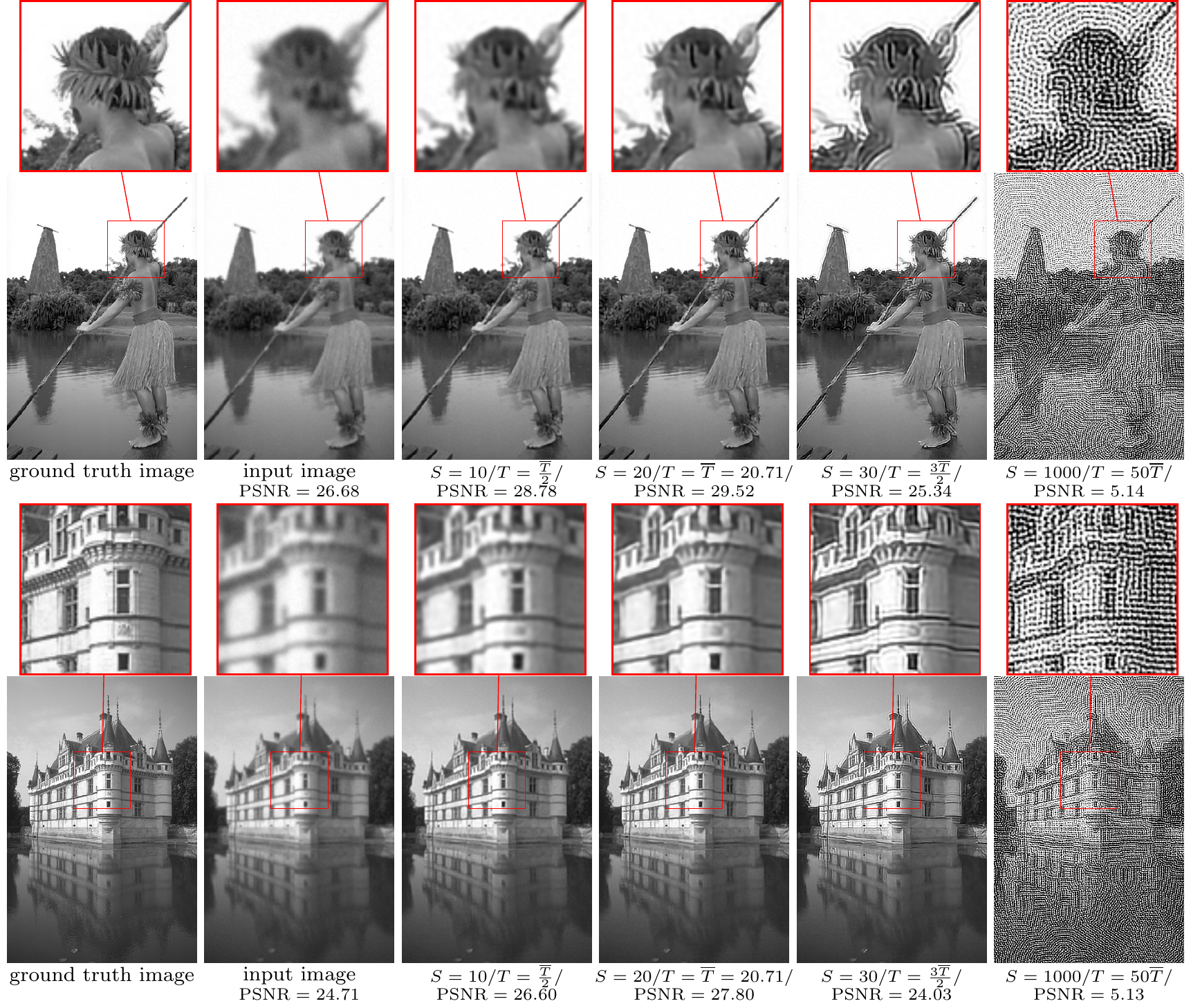}
\caption{From left to right: ground truth image, blurry input image ($\tau=1.5$, $\sigma=0.01$), restored images for $T=\frac{\overline{T}}{2},\overline{T},\frac{\overline{3T}}{2},50\overline{T}$ with $\overline{T}=20.71$ for image deblurring.}
\label{fig:EulerResultsDeblur}
\end{figure*}

Figure~\ref{fig:differentNoise} illustrates the plots of the energies (blue plots) and the first order conditions (red plots) as a function of the stopping time~$T$ for all test images for denoising (left) and deblurring (right), which are degraded by noise levels~$\sigma\in\{0.075,0.1,0.125,0.15\}$ and different blur strengths~$\tau\in\{1.25,1.5,1.75,2.0\}$.
Note that in each plot the associated curves of three prototypic images are visualized.
To ensure a proper balancing of the data fidelity term and the regularization energy for the denoising task, we add the factor $\frac{1}{\sigma^2}$ to the data term as typically motivated by Bayesian inference.
For all noise levels~$\sigma$ and blur strengths~$\tau$, the same fixed pairs of kernels and activation functions trai\-ned with $\sigma=0.1$/$\tau=1.5$ and depth~$S=20$ are used.
Again, the first order conditions indicate the degradation depending energy minimizing stopping times.
The optimal stopping time increases with the noise level and blur strength, which results from a larger distance of~$x_0$ and $\groundTruth$ and thus requires longer trajectories.
\begin{figure*}[htb]
\includegraphics[width=\linewidth]{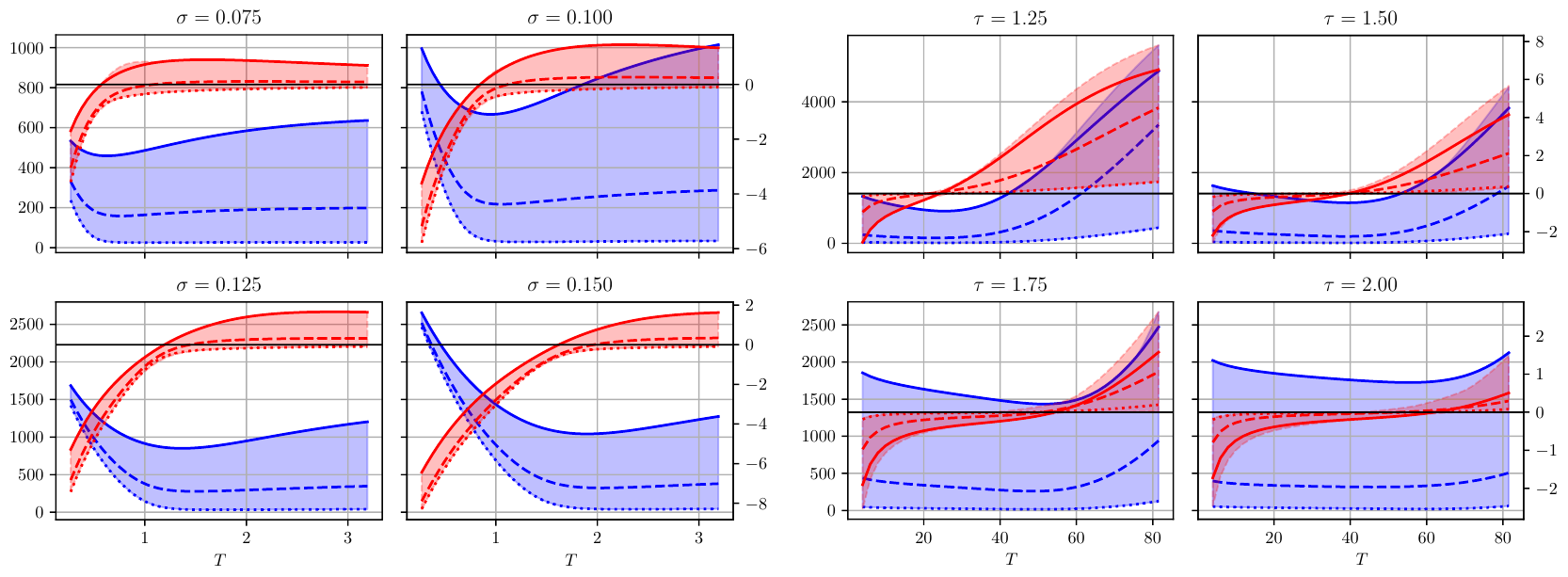}
\caption{Band plots of the energies (blue plots) and first order conditions (red plots) for image denoising (left) and image deblurring (right) and various degradation levels.
In each plot, the curves of three prototypic images are shown.}
\label{fig:differentNoise}
\end{figure*}

Table~\ref{tab:tablePSNR} presents pairs of average PSNR values and optimal stopping times~$\overline{T}$ for the test set for denoising (top) and deblurring (bottom) for different noise levels~$\sigma\in\{0.075,0.1,0.125,0.15\}$ and blur strengths~$\tau\in\{1.25,1.5,1.75,2.0\}$.
All results in the table are obtained using $N_K=48$ kernels and a depth~$S=20$.
Both first rows present the results incorporating an optimization of all control parameters~$(K_k, w_k)_{k=1}^{N_K}$ and $T$.
In contrast, both second rows show the resulting PSNR values and optimal stopping times for only a partial optimization of the stopping times and pretrained kernels and activation functions~$(K_k, w_k)_{k=1}^{N_K}$ for~$\sigma=0.1$/$\tau=1.5$.
Interestingly, the resulting PSNR values are almost identical for image denoising despite varying optimal stopping times.
Consequently, a model that was pretrained for a specific noise level can be easily adapted to noise levels by modifying the optimal stopping time.
However, in the case of image deblurring the model benefits from a full optimization of all controls, which is caused by the dependency of~$A$ on the blur strength.
For the noise level~$0.1$ we observe the average PSNR value~$28.72$ which is on par with the corresponding results of~\cite[Table~II]{ChRa15}.
We emphasize that in their work a costly full minimization of an energy functional is performed, whereas we solely require a depth~$S=20$ to compute comparable results.
\begin{table*}
\caption{Average PSNR value of the test set for image denoising/deblurring with different degradation levels along with the optimal stopping time~$\overline{T}$.
The first rows of each table present the results obtained by the optimization of all control parameters.
The second rows show the results calculated with fixed $(K_k, w_k)_{k=1}^{N_K}$, which were pretrained for~$\sigma=0.1$/$\tau=1.5$.
}
\label{tab:tablePSNR}
\centering
\resizebox{\linewidth}{!}{
\begin{tabular}{l|cc|cc|cc|cc}
\multicolumn{9}{c}{\textbf{image denosing}}\\[.5em] 
 & \multicolumn{2}{c|}{$\sigma=0.075$}& \multicolumn{2}{c|}{$\sigma=0.1$}& \multicolumn{2}{c|}{$\sigma=0.125$}& \multicolumn{2}{c}{$\sigma=0.15$} \\
 & $\overline{\mathrm{PSNR}}$ & $\overline{T}$ & $\overline{\mathrm{PSNR}}$ & $\overline{T}$ & $\overline{\mathrm{PSNR}}$ & $\overline{T}$ & $\overline{\mathrm{PSNR}}$ & $\overline{T}$ \\
\noalign{\smallskip}\hline\noalign{\smallskip}
full optimization of all controls & 30.05 & 0.724 & \multirow{2}{*}{28.72} & \multirow{2}{*}{1.082} & 27.72 & 1.445 & 26.95 & 1.433 \\
optimization only of~$\overline{T}$ & 30.00 & 0.757 &  &  & 27.73 & 1.514 & 26.95 & 2.055 \\
\noalign{\smallskip}\hline
\end{tabular}
}
\vspace{1em}
\resizebox{\linewidth}{!}{
\begin{tabular}{l|cc|cc|cc|cc}
\multicolumn{9}{c}{\textbf{image deblurring}}\\[.5em] 
& \multicolumn{2}{c|}{$\tau=1.25$}& \multicolumn{2}{c|}{$\tau=1.5$}& \multicolumn{2}{c|}{$\tau=1.75$}& \multicolumn{2}{c}{$\tau=2.0$} \\
& $\overline{\mathrm{PSNR}}$ & $\overline{T}$ & $\overline{\mathrm{PSNR}}$ & $\overline{T}$ & $\overline{\mathrm{PSNR}}$ & $\overline{T}$ & $\overline{\mathrm{PSNR}}$ & $\overline{T}$ \\
\noalign{\smallskip}\hline\noalign{\smallskip}
full optimization of all controls & 29.95 & 39.86 & \multirow{2}{*}{28.76} & \multirow{2}{*}{37.78} & 27.87 & 40.60 & 27.13 & 40.01 \\
optimization only of~$\overline{T}$  & 29.73 & 23.86 &  &  & 27.69 & 47.72 & 26.71 & 51.70 \\
\noalign{\smallskip}\hline
\end{tabular}
}
\end{table*}

For the sake of completeness, we present in Figure~\ref{fig:kernels} (denoising) and Figure~\ref{fig:kernelsDeblur} (deblurring) the resulting triplets of kernels (top), potential functions (middle) and activation functions (bottom) for a depth~$S=20$.
The scaling of the axes is identical among all potential functions and activation functions, respectively.
Note that the potential functions are computed by numerical integration of the learned activation functions and we choose the integration constant such that every potential function is bounded from below by 0.
As a result, we observe a large variety of different kernel structures, including bipolar forward operators in different orientations (e.g.~5$^{th}$~kernel in first row, 8$^{th}$~kernel in third row)
or pattern kernels representing prototypic image textures (e.g.~kernels in first column).
Likewise, the learned potential functions can be assigned to several representative classes of common regularization functions like, for instance, truncated total variation (8$^{th}$~function in second row of Figure~\ref{fig:kernels}),
truncated concave (4$^{th}$~function in third row of Figure~\ref{fig:kernels}), double-well potential (10$^{th}$~function in first row of Figure~\ref{fig:kernels}) or "negative Mexican hat" (8$^{th}$~function in third row of Figure~\ref{fig:kernels}).
Note that the associated kernels in both tasks nearly coincide, whereas the potential and activation functions significantly differ.
We observe that the activation functions in the case of denoising have a tendency to generate higher amplitudes compared to deblurring, which results in a higher relative balancing of the regularizer in the case of denoising.
\begin{figure*}
\includegraphics[width=\linewidth]{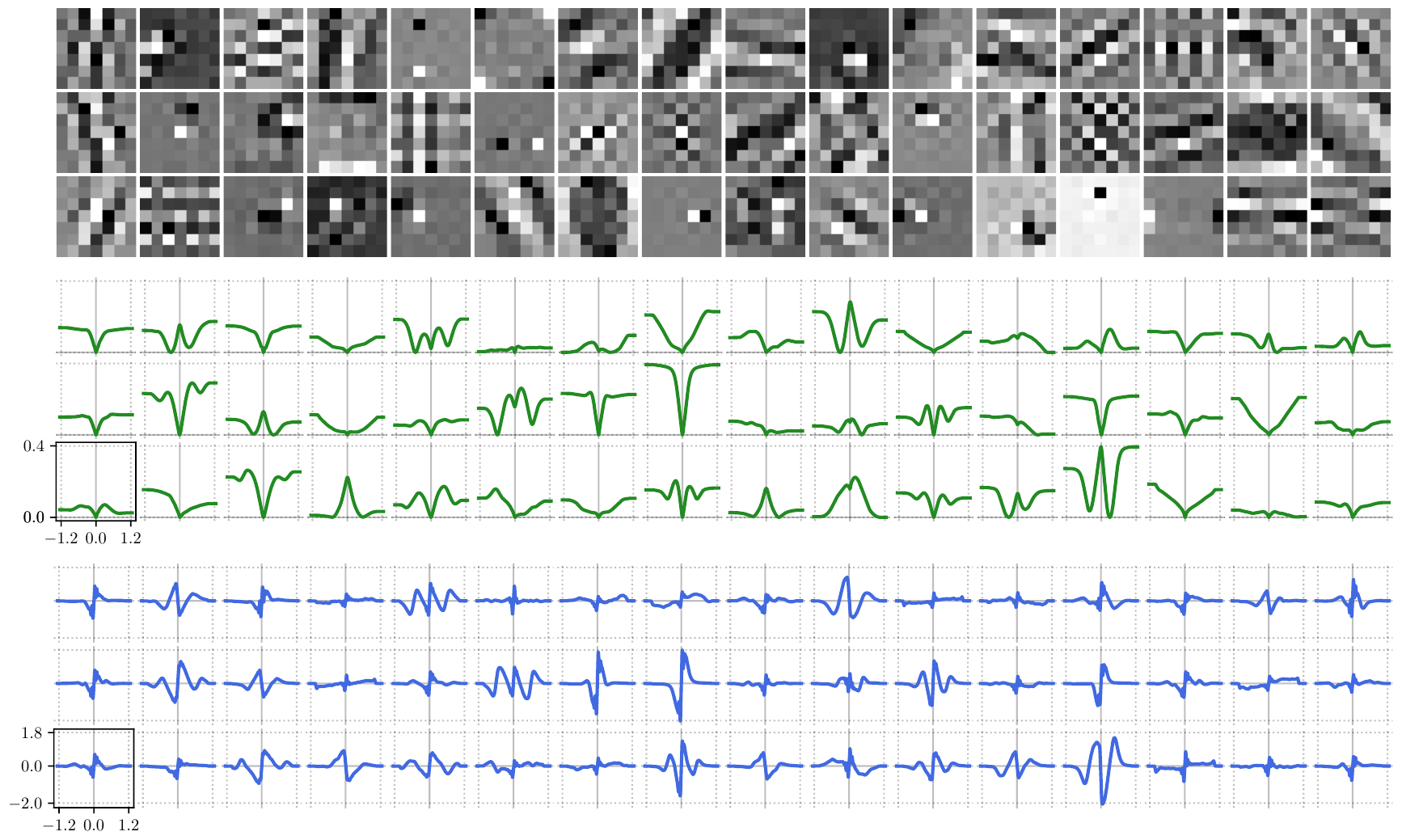}
\caption{Triplets of $7\times 7$-kernels (top), potential functions~$\rho$ (middle) and activation functions~$\phi$ (bottom) learned for image denoising.}
\label{fig:kernels}
\end{figure*}

\begin{figure*}
\includegraphics[width=\linewidth]{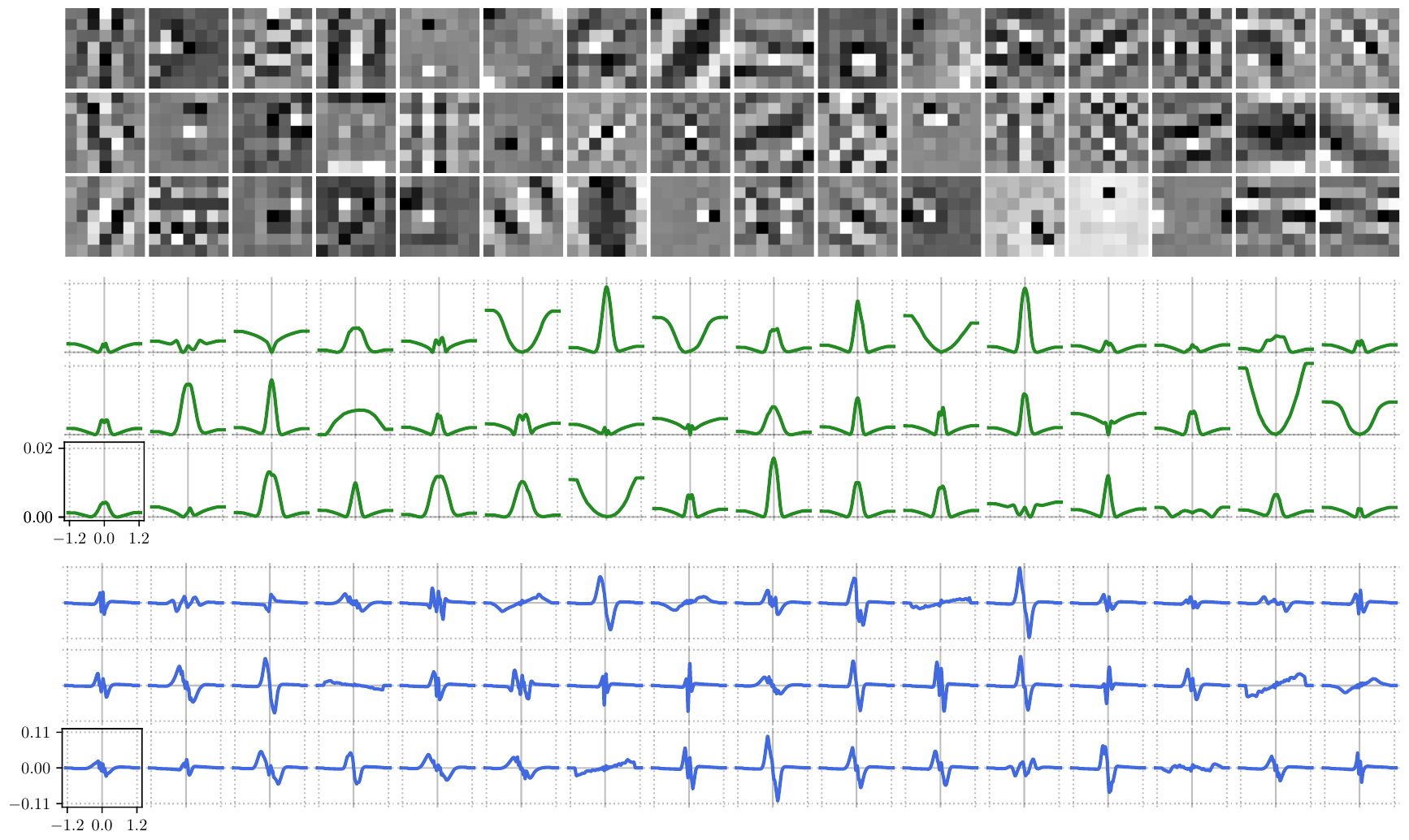}
\caption{Triplets of $7\times 7$-kernels (top), potential functions~$\rho$ (middle) and activation functions~$\phi$ (bottom) learned for image deblurring.}
\label{fig:kernelsDeblur}
\end{figure*}

\subsection{Spectral analysis of the learned regularizers}
Finally, in order to gain intuition of the learned regularizer, we perform a nonlinear eigenvalue analysis~\cite{Gi18} for the gradient of the Field of Experts regularizer learned for $S=20$ and $T=\overline{T}$.
For this reason, we compute several generalized eigenpairs $(\lambda_j,v_j)\in\R\times\R^{n}$ satisfying
\[
\sum_{k=1}^{N_K}K_k^\top\Phi_k(K_k v_j)=\lambda_j v_j
\]
for $j=1,\ldots,N_v$.
Note that by omitting the data term, the forward Euler scheme~\eqref{eq:Euler} applied to the generalized eigenfunctions~$v_j$ reduces to
\begin{equation}
v_j-\frac{T}{S}\sum_{k=1}^{N_K}K_k^\top\Phi_k(K_k v_j)=\left(1-\frac{\lambda_j T}{S}\right)v_j,
\label{eq:updateEigenfunction}
\end{equation}
where the contrast factor $(1-\frac{\lambda_j T}{S})$ determines the global intensity change of the eigenfunction.
We point out that due to the nonlinearity of the eigenvalue problem such a formula only holds locally for each iteration of the scheme.

We compute $N_v=64$~generalized eigenpairs of size $127\times 127$ by solving
\begin{align}
\min_{\{v_j\}_{j=1}^{N_v}}\sum_{j=1}^{N_v}\left\|\sum_{k=1}^{N_K}K_k^\top\Phi_k(K_k v_j)-\Lambda(v_j)v_j\right\|_2^2,
\label{eq:generalEVProblemLoss}
\end{align}
where
\[
\Lambda(v)=\frac{\left\langle\sum_{k=1}^{N_K}K_k^\top\Phi_k(K_k v),v\right\rangle}{\|v\|_2^2}
\]
denotes the generalized Rayleigh quotient, which is derived by minimizing \eqref{eq:generalEVProblemLoss} with respect to $\Lambda(v)$.
The eigenfunctions are computed using an accelerated gradient descent with step size control~\cite{PoSa16}.
All eigenfunctions are initialized with randomly chosen image pat\-ches of the test image data set, from which we subtract the mean.
Moreover, in order to minimize the influence of the image boundary, we scale the image intensity values with a Gaussian kernel.
We run the algorithm for $10^4$~iterations, which is sufficient for reaching a residual of approximately~$10^{-5}$ for each eigenpair.

Figure~\ref{fig:eigenpairsDenoising} depicts the resulting pairs of eigenfunctions and eigenvalues for image denoising.
We observe that eigenfunctions corresponding to smaller eigenvalues represent in general more complex and smoother image structures.
In particular, the first eigenfunctions can be interpreted as cartoon-like image structures with clearly separable interfaces.
Most of the eigenfunctions associated with larger eigenvalues exhibit texture-like patterns with a progressive frequency.
Finally, wave and noise structures are present in the eigenfunctions with the highest eigenvalues.

We remark that all eigenvalues are in the interval~$[0.025,11.696]$.
Since $\frac{T}{S}\approx 0.054$, the contrast factors $(1-\frac{\lambda_j T}{S})$ in \eqref{eq:updateEigenfunction}
are in the interval~$[0.368,0.999]$, which shows that the regularizer has a tendency to decrease the contrast.
Formula~\eqref{eq:updateEigenfunction} also reveals that eigenfunctions corresponding to contrast factors close to~$1$ are preserved over several iterations.
In summary, the learned regularizer has a tendency to reduce the contrast of high-frequency noise patterns, but preserves the contrast of texture- and structure-like patterns.

Figure~\ref{fig:eigenpairsDeblurring} shows the eigenpairs for the deblurring task.
All eigenvalues are relatively small and distributed around~$0$, which means that the corresponding contrast factors lie in the interval~$[0.992,1.030]$.
Therefore, the learned regularizer can both decrease and increase the contrast.
Moreover, most eigenfunctions are composed of smooth structures with a distinct overshooting behavior in the proximity of image boundaries.
This implies that the learned regularizer has a tendency to perform image sharpening.

\begin{figure*}
\includegraphics[width=\linewidth]{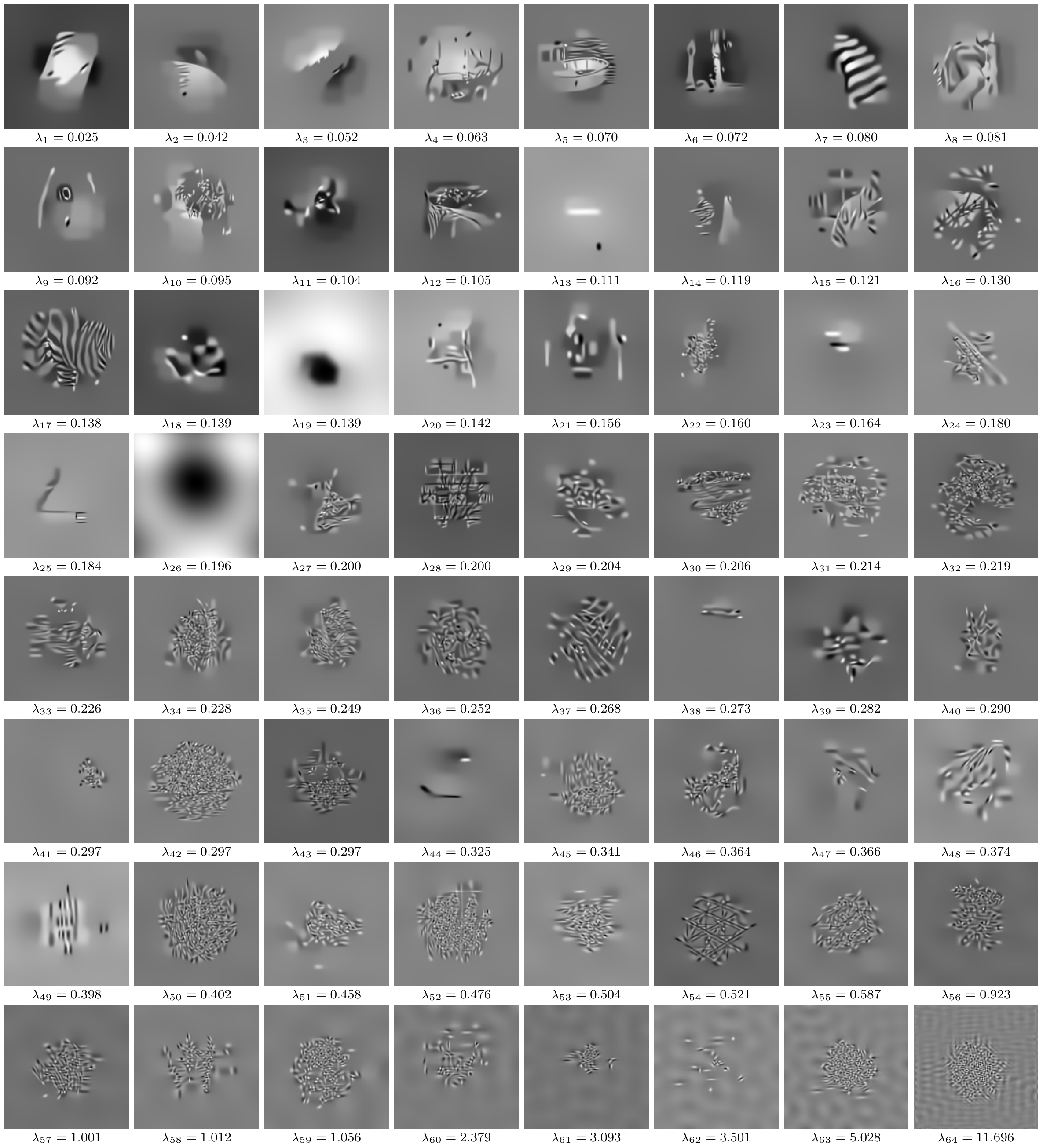}
\caption{$N_v=64$~eigenpairs for image denoising, where all eigenfunctions have the resolution $127\times 127$ and the intensity of each eigenfunction is adjusted to $[0,1]$.}
\label{fig:eigenpairsDenoising}
\end{figure*}

\begin{figure*}
\includegraphics[width=\linewidth]{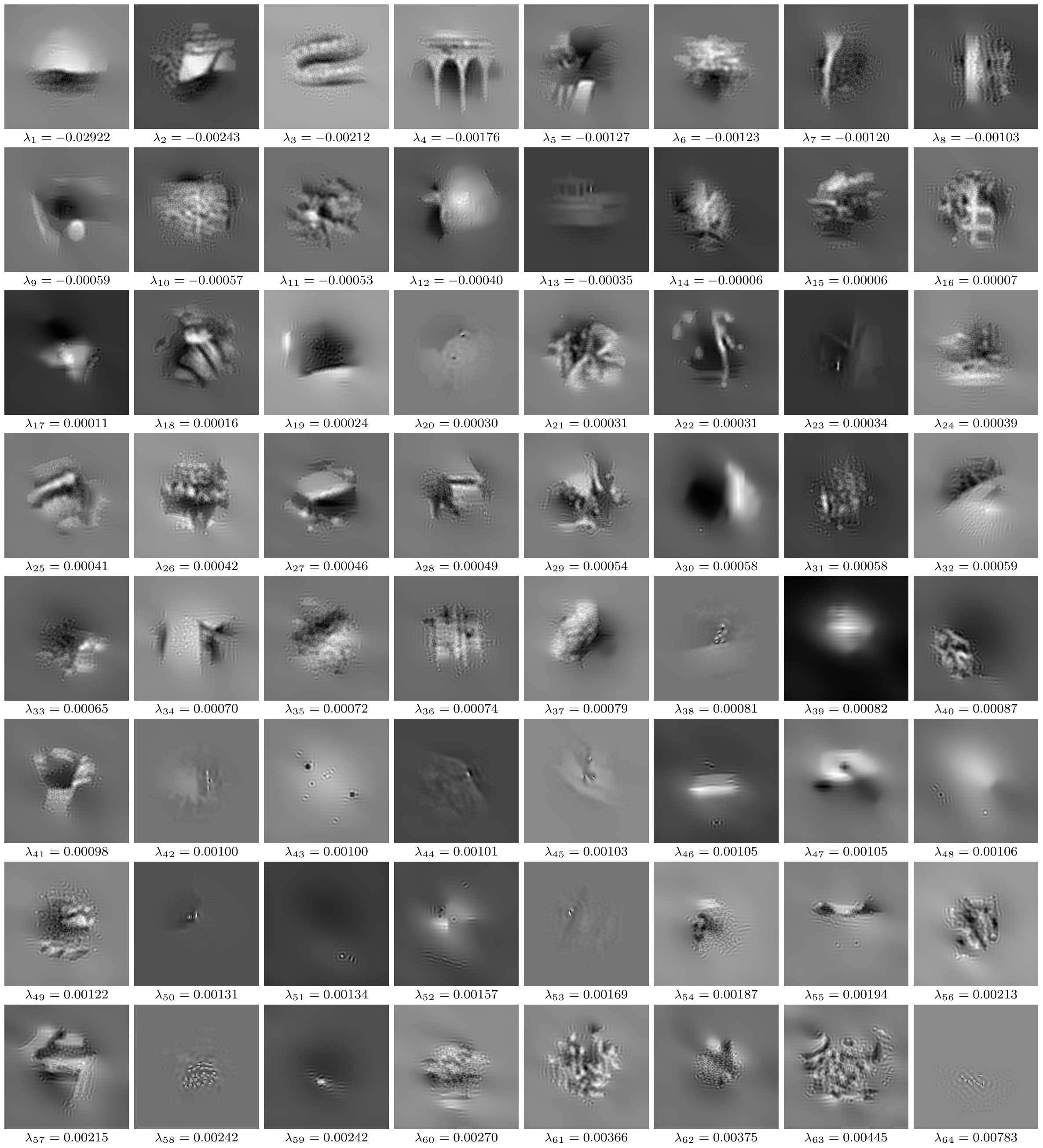}
\caption{$N_v=64$~eigenpairs for image deblurring, where all eigenfunctions have the resolution $127\times 127$ and the intensity of each eigenfunction is adjusted to $[0,1]$.}
\label{fig:eigenpairsDeblurring}
\end{figure*}

\section{Conclusion}
Starting from a parametric and autonomous gradient flow perspective of variational methods, we explicitly modeled the stopping time as a control variable in an optimal control problem.
By using a Lagrangian approach we derived a first order condition suited to automatize the calculation of the energy minimizing optimal stopping time.
A forward Euler discretization of the gradient flow led to static variational networks.
Numerical experiments confirmed that a proper choice of the stopping time is of vital importance for the image restoration tasks in terms of the PSNR value.
We performed a nonlinear eigenvalue analysis of the gradient of the learned Field of Experts regularizer, which revealed interesting properties of the local regularization behavior.
A comprehensive long-term spectral analysis in continuous time is left for future research.

\paragraph{Acknowledgements.}
We acknowledge support from the European Research Council under the Horizon 2020 program, ERC starting grant HOMOVIS (No. 640156) and ERC advanced grant OCLOC (No. 668998).

\end{document}